\def\R{\mathbb R}
\def\S{\mathbb S}
\def\be{\beta}
\def\ga{\gamma}
\def\de{\delta}
\def\ep{\epsilon}
\def\ka{\kappa}
\def\var{\varphi}
\def\om{\omega}
\def\na{\nabla}
\def\Om{\Omega}  
\def\De{\Delta}      
\def\Si{\Sigma}      
\def\La{\Lambda}
\def\cal{\mathcal}
\def\wq{\infty}
\def\pa{\partial}
\def\loc{\text{\rm loc}}
\newcommand{\medint}{-\kern -,375cm\int}         
\newcommand{\medintinrigo}{-\kern -,315cm\int}
\newcommand{\wto}{\rightharpoonup}                
\newcommand\sing{\operatorname{sing}}
\numberwithin{equation}{section}
\newtheorem{theorem}{Theorem}[section]
\newtheorem*{theorem*}{Theorem}  
\newtheorem*{conclusion*}{Conclusin}
\newtheorem{corollary}[theorem]{Corollary}
\newtheorem*{corollary*}{Corollary}
\newtheorem{definition}[theorem]{Definition}
\newtheorem{lemma}[theorem]{Lemma}
\newtheorem*{lemma*}{Lemma}
\newtheorem*{notation*}{Notation}
\newtheorem{proposition}[theorem]{Proposition}
\newtheorem*{proposition*}{Proposition}
\newtheorem{remark}[theorem]{Remark}
\newtheorem*{remark*}{Remark}
\newtheorem*{example*}{Example}                
\theoremstyle{definition}
\begin{document}
	\title[]{A global regularity theory for shpere-valued fractional harmonic maps}
	
	\author[Y. He, C.-L. Xiang and G.-F. Zheng]{Yu He, Chang-Lin Xiang$^\ast$ and Gao-Feng Zheng}
	
	\address[Yu He]{School of Mathematics and Statistics, Central China Normal University, Wuhan 430079,  P. R.  China}
	\email{yu$\_$he@mails.ccnu.edu.cn}
	
	\address[Chang-Lin Xiang]{Three Gorges Mathematical Research Center, China Three Gorges University, 443002, Yichang, People's Republic of China}
	\email{changlin.xiang@ctgu.edu.cn}
	
	\address[Gao-Feng Zheng]{School of Mathematics and Statistics, Central China Normal University, Wuhan 430079,  P. R.  China}
	\email{gfzheng@ccnu.edu.cn}
	
	\thanks{*: corresponding author}
	\thanks{The corresponding author C.-L. Xiang is supported by NSFC (No. 11701045) and the NSF of Hubei province, P.R. China (No. 2024AFA061).  G.-F. Zheng is supported by the NSFC (No. 11571131). Both C.-L. Xiang  and  G.-F. Zheng are partly supported by the Open Research Fund of Key Laboratory of Nonlinear Analysis \& Applications  (Central China Normal University), Ministry of Education, P. R. China.}
	
	\begin{abstract}
		In this paper we consider sphere-valued stationary/minimizing fractional harmonic mappings introduced in recent years by several authors, especially by Millot-Pegon-Schikorra \cite{Millot-Pegon-Schikorra-2021-ARMA} and Millot-Sire \cite{Millot-Sire-15}. Based on their rich partial regularity theory,  we  establish a quantitative stratification theory for singular sets of these mappings by making use of the quantitative differentiation approach of Cheeger-Naber \cite{Cheeger-Naber-2013-CPAM}, from which a global regularity estimates follows.
	\end{abstract}
	
	\maketitle
	
	{\small
		\keywords {\noindent {\bf Keywords:}  $s$-harmonic maps; regularity theory; quantitative symmetry; singular set.}
		\smallskip
		\newline
		\subjclass{\noindent {\bf 2020 Mathematics Subject Classification:} 53C43, 35J48}
		\tableofcontents}
	\bigskip
	
	\section{Introduction and main results}
	
	\subsection{Background} In the series of seminar works \cite{DaLio-13-CV, DaLio-Riviere-11-Adv,DaLio-Riviere-11-APDE},  odd order harmonic mappings from $\R^m$ into a closed Riemannian manifold $N$ were considered by Da Lio and Rivi\`ere for the first time. That is, critical points of the energy functional
	\begin{equation}\label{eq: odd order HM}
	\frac{1}{2}\int_{\R^m}|(-\De)^\frac{m}{4}u|^2dx,\qquad u\in H^{m}(\R^m, N),
	\end{equation}
	where $m$ is an odd positive integer. One of their main results in \cite{DaLio-13-CV, DaLio-Riviere-11-Adv,DaLio-Riviere-11-APDE} is the smoothness of these harmonic mappings in critical dimensions, which thus extended the famous work of Helein \cite{Helein-1991,Helein-2002} on Harmonic mappings from surfaces. 	Since then, odd order harmonic mappings have been extended to various nonlocal settings, see e.g. \cite{DaLio-Schikorra-14-ACV,Millot-Pegon-2020-CV,Millot-Pegon-Schikorra-2021-ARMA,Millot-Sire-15,Millot-Sire-Wang-19,Millot-Sire-Yu-18,Roberts-18-CV,Schikorra-2012-CV,Schikorra-2015-CPDE} and the references therein. Due to its close connection with nonlocal Ginzburgh-Landau euqations, nonlocal minimal surfaces, fractional Allen-Cahn equations, free boundary value problems and etc, in the interesting series works \cite{Millot-Pegon-2020-CV, Millot-Pegon-Schikorra-2021-ARMA, Millot-Sire-15,  Millot-Sire-Wang-19, Millot-Sire-Yu-18} of Millot, Pegon, Sire,  Wang and Yu, the following type of  fractional harmonic mappings were considered. To be precise, let us assume throughout this note that $s \in(0,1)$ and $\Omega \subseteq \mathbb{R}^m$ is a bounded smooth open set. Following \cite{Millot-Pegon-Schikorra-2021-ARMA, Millot-Sire-15,  Millot-Sire-Wang-19},  define the fractional Dirichlet energy in $\Omega$ of a measurable map $u: \mathbb{R}^m \rightarrow \mathbb{R}^d$ by
	\begin{equation}\label{energy E_s}
		\mathcal{E}_s(u, \Omega):=\frac{\gamma_{m, s}}{4} \iint_{\left(\mathbb{R}^m \times \mathbb{R}^m\right) \backslash\left(\Omega^c \times \Omega^c\right)} \frac{|u(x)-u(y)|^2}{|x-y|^{m+2 s}} \mathrm{d} x \mathrm{d} y,
	\end{equation}
	where  $\Omega^c:=\mathbb{R}^m \backslash \Omega$ denotes the complement of $\Omega$. The normalisation constant $	\gamma_{m, s}=s 2^{2 s} \pi^{-\frac{m}{2}} \Gamma\left(\frac{m+2 s}{2}\right)/{\Gamma(1-s)} $ is such  that
	$$
	\mathcal{E}_s(u, \Omega)=\frac{1}{2} \int_{\mathbb{R}^m}\left|(-\Delta)^{\frac{s}{2}} u\right|^2 \mathrm{d} x, \qquad \forall\, u \in \mathcal{D}\left(\Omega ; \mathbb{R}^d\right) .
	$$
	In particular, in the case $m=1$ and $s=1/2$, this coincides with energy \eqref{eq: odd order HM} in 1-dimension. To define the fractional harmonic map, we first introduce the Hilbert space (see e.g. \cite[Section 2.1]{Millot-Pegon-Schikorra-2021-ARMA})
	\[\widehat{H}^{s}(\Om, \mathbb{R}^d):=\Big\{u\in L_{loc}^2(\mathbb{R}^m, \R^d): \mathcal{E}_s(u,\Om)<\wq\}\] 
with norm
\[\|u\|_{\widehat{H}^{s}(\Om, \mathbb{R}^d)}=\left(\|u\|_{L^2(\Om)}^2+\mathcal{E}_s(u,\Om)\right)^{\frac 12},\]
	and then  define, for a  closed Riemannian manifold $N\hookrightarrow \mathbb{R}^d$ (the embedding is isometric),
	\[\widehat{H}^{s}(\Om, N):=\Big\{u\in \widehat{H}^{s}(\Om, \mathbb{R}^d): u(x)\in N \text{ for a.e. } x\in \R^m \Big\}.\]
	Now we can recall the definition of $s$-harmonic mappings of \cite{Millot-Pegon-2020-CV, Millot-Pegon-Schikorra-2021-ARMA, Millot-Sire-15,  Millot-Sire-Wang-19, Millot-Sire-Yu-18}.	
	\begin{definition} We say that $u \in \widehat{H}^s(\Omega, N)$ is a weakly s-harmonic map, if
		\begin{equation}\label{weak s-harmonic map}
			\left.\frac{d}{d t}\right|_{t=0} \mathcal{E}_s\left(\pi_N(u+t \varphi), \Omega\right)=0 \quad \text { for all } \varphi \in C_0^{\infty}\left(\Omega, \mathbb{R}^d\right),
		\end{equation}		
		where $\pi_N$ denotes the smooth nearest point projection of $N$.
		
		If a weakly s-harmonic map $u \in \widehat{H}^s(\Omega, N)$ also satisfies
		\begin{equation}\label{stationary s-harmonic map}
			\left.\frac{d}{d t}\right|_{t=0} \mathcal{E}_s(u(x+t \psi(x)))=0 \quad \text { for all } \psi \in C_0^{\infty}\left(\Omega, \mathbb{R}^m\right),
		\end{equation}
		then it is called a stationary s-harmonic map.
		
		Finally, $u \in \widehat{H}^s(\Omega, N)$ is called a minimizing s-harmonic map if
		\begin{equation}\label{minimizing s-harmonic map}
			\mathcal{E}_s(u, \Omega) \leq \mathcal{E}_s(v, \Omega)
		\end{equation}
		for all $v \in \widehat{H}^s(\Omega, N)$ with $v=u$ on $\Omega \backslash K$ for some compact $K \subset \Omega$.
	\end{definition}
	
	In the series of  works \cite{Millot-Pegon-2020-CV, Millot-Pegon-Schikorra-2021-ARMA, Millot-Sire-15,  Millot-Sire-Wang-19, Millot-Sire-Yu-18}, a very rich partial regularity theory for $s$-harmonic mappings have been established. For our purpose, we merely collect part of their results concerning sphere-valued
	$s$-harmonic mappings in the below.
	\begin{theorem}  (\cite[Theorem 1.1]{Millot-Pegon-Schikorra-2021-ARMA})\label{thm: 1.1}
		If $m=1$ and $s\in [1/2,1)$, then each	weakly $s$-harmonic map $u \in \widehat{H}^s\left(\Omega ; \mathbb{S}^{d-1}\right)$ is smooth.
	\end{theorem}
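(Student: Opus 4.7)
The plan is to combine an Euler--Lagrange reduction specific to sphere-valued maps with an $\epsilon$-regularity argument in the critical regime, and then bootstrap to smoothness. First I would derive from \eqref{weak s-harmonic map} the pointwise (distributional) equation. Since $u$ takes values in $\mathbb{S}^{d-1}$, the constraint $|u|^2\equiv 1$ forces admissible tangential variations $\varphi=(\Id-u\otimes u)\psi$, from which $(-\De)^s u$ is pointwise normal to the sphere along $u$. This yields a Rivi\`ere-type equation with antisymmetric structure, schematically
\begin{equation*}
    (-\De)^s u^i = u^i\,\bigl|(-\De)^{s/2} u\bigr|^2 + \text{bilinear nonlocal remainder},
\end{equation*}
where the right-hand side vanishes once projected onto $T_u \mathbb{S}^{d-1}$ and the bilinear remainder can be reorganized into an antisymmetric potential $\Omega\in L^2$ with $(-\De)^s u=\Omega\cdot u$.

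The next step is to establish local H\"older continuity of $u$. When $s>1/2$ and $m=1$, the fractional Sobolev embedding gives $\widehat{H}^s(\Omega)\hookrightarrow C^{0,s-1/2}_{\loc}(\Omega)$ essentially for free. The critical case $s=1/2$ is the heart of the theorem: here $\widehat{H}^{1/2}$ embeds only into $\mathrm{BMO}\cap L^p_{\loc}$ for every $p<\wq$, so the bilinear right-hand side barely lies in $L^1$, and no standard Calder\'on--Zygmund argument closes. The strategy is the Da Lio--Rivi\`ere compensated compactness approach: using the three-commutator identity for $(-\De)^{1/4}$ and the antisymmetry of $\Omega$, one shows that localized commutators of the form $[(-\De)^{1/4},u^i](-\De)^{1/4}u^j-[(-\De)^{1/4},u^j](-\De)^{1/4}u^i$ lie in a better space (a fractional analog of the Coifman--Lions--Meyer--Semmes Hardy-space estimate) than each summand. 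Feeding this into the EL equation yields an $\epsilon$-regularity/decay estimate of Morrey--Campanato type $\mathcal{E}_{1/2}(u,B_r)\leq C r^{2\alpha}\mathcal{E}_{1/2}(u,B_R)$ for some $\alpha>0$, hence local H\"older continuity.

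Once $u\in C^{0,\alpha}_{\loc}$ is established (in both ranges of $s$), a standard bootstrap closes the argument: applying Schauder/$L^p$ estimates for $(-\De)^s$ to the equation $(-\De)^s u=F(u,(-\De)^{s/2}u)$, each iteration gains $2s$ derivatives modulo logarithmic losses, and the bilinear structure of $F$ ensures that $u\in C^{k,\alpha}_{\loc}$ implies $F(u,(-\De)^{s/2}u)\in C^{k-1,\alpha}_{\loc}$ up to nonlocal tails controlled by the global $\widehat{H}^s$-norm. Iterating yields $u\in C^\wq_{\loc}(\Omega)$.

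The hard part, as expected, is the critical case $s=1/2$. The technical difficulties are twofold: first, adapting the three-commutator/Hardy-space cancellation to the nonlocal setting on $\Omega\subset\R$ while keeping track of the tail contributions encoded in the $\widehat{H}^{1/2}$-norm (which allows long-range interactions through $\Omega\times\Omega^c$); and second, arranging the weak formulation \eqref{weak s-harmonic map} so that the algebraic antisymmetry producing the compensation is preserved after truncation and localization. A viable alternative, should the direct route be recalcitrant, is to pass to the Caffarelli--Silvestre harmonic extension on $\R^{m+1}_+$ and translate the problem into a free-boundary problem for a degenerate elliptic equation, from which an Almgren-type frequency/monotonicity formula provides the required energy decay.
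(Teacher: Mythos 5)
This result is cited by the paper (\cite[Theorem 1.1]{Millot-Pegon-Schikorra-2021-ARMA}) and used as a black box; the paper does not reprove it, so there is no in-paper argument to compare against. The relevant source is the proof in Millot--Pegon--Schikorra, which for the critical case $m=1$, $s=1/2$ goes back to the work of Da Lio and Rivi\`ere on $1/2$-harmonic maps into spheres. Your primary route --- deriving a Rivi\`ere-type Euler--Lagrange equation with an antisymmetric potential from the $\mathbb{S}^{d-1}$-constraint, getting continuity for free by Sobolev embedding when $s>1/2$, and invoking three-term commutator / Hardy-space compensation in the critical case $s=1/2$, followed by a nonlocal Schauder bootstrap --- is indeed the underlying strategy. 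The schematic EL equation you write is not quite the one that actually appears (the Lagrange multiplier on the right-hand side in the sphere case is a nonlocal Gagliardo-kernel quadratic term, $u(x)\,\tfrac{\gamma_{m,s}}{2}\int_{\R^m}|u(x)-u(y)|^2|x-y|^{-m-2s}\,dy$, rather than $u\,|(-\De)^{s/2}u|^2$, and extracting the antisymmetric potential from it, particularly when $s\neq 1/2$, is a genuine piece of work), but as a sketch it captures the right structure.

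The one genuine gap is in your proposed ``viable alternative.'' You suggest passing to the Caffarelli--Silvestre extension and using an Almgren-type frequency/monotonicity formula to obtain the required energy decay. But the monotonicity formula in this setting (Theorem \ref{monotonicity formula}, i.e.\ Proposition 2.17 of Millot--Pegon--Schikorra) is proved for \emph{stationary} $s$-harmonic maps, i.e.\ those satisfying the inner-variation identity \eqref{stationary s-harmonic map} in addition to \eqref{weak s-harmonic map}. Theorem \ref{thm: 1.1} concerns \emph{weakly} $s$-harmonic maps only, and for such maps the monotonicity of $r\mapsto\boldsymbol{\Theta}_s(u^e,\mathbf{x},r)$ is not available (this is precisely why the partial regularity Theorem \ref{thm: partial regularity of MPS} carries the monotonicity hypothesis as an explicit assumption). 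So the fallback route would not apply to the full class of maps covered by the theorem. This is exactly the point of the compensation-compactness approach you put forward as the primary route: it exploits the algebraic antisymmetry of the Euler--Lagrange equation rather than any variational monotonicity, and hence works for weak solutions without stationarity. You should therefore treat the compensation argument as the only viable route for the critical case, not as one of two interchangeable options.
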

In the case $0<s<1/2$, the same smoothness result also holds for minimizing $s$-harmonic maps, see the third conclusion of Theorem \ref{thm: 1.3} (see also \cite[Theorem 1.2]{Millot-Sire-Yu-18} for H\"older regularity). These results will play a fundamental role in our later argument. 	For stationary $s$-harmonic maps, the following partial regularity holds. That is, by defining the set of singular points as
	\[\sing(u)=\{x\in \Om: u \text{ is not continuous at } x\},\]
	there holds
	\begin{theorem}   (\cite[Theorem 1.2]{Millot-Pegon-Schikorra-2021-ARMA}) \label{thm: 1.2} Assume that $s \in(0,1)$ and $m>2 s$. If $u \in \widehat{H}^s\left(\Omega ; \mathbb{S}^{d-1}\right)$ is a stationary  $s$-harmonic map in $\Omega$, then $u \in C^{\infty}(\Omega \backslash \operatorname{sing}(u))$ and
		
		(1) for $s>1 / 2$ and $m \geqq 3, \operatorname{dim}_{\mathcal{H}} \operatorname{sing}(u) \leqq m-2$;
		
		(2) for $s>1 / 2$ and $m=2$, $\operatorname{sing}(u)$ is locally finite in $\Omega$;
		
		(3) for $s=1 / 2$ and $m \geqq 2, \mathcal{H}^{m-1}(\operatorname{sing}(u))=0$;
		
		(4) for $s<1 / 2$ and $m \geqq 2$, $\operatorname{dim}_{\mathcal{H}} \operatorname{sing}(u) \leqq m-1$;
		
		(5) for $s<1 / 2$ and $m=1$, $\operatorname{sing}(u)$ is locally finite in $\Omega$.
	\end{theorem}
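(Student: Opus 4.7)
The natural strategy, paralleling the Schoen--Uhlenbeck partial regularity theory for harmonic maps, is the triptych of monotonicity formula, $\varepsilon$-regularity, and Federer dimension reduction, carried out on the Caffarelli--Silvestre extension.  I would first replace $u$ by its $s$-harmonic extension $\tilde u:\R^{m+1}_+\to\R^d$, the unique minimizer of the weighted Dirichlet integral with Muckenhoupt weight $t^{1-2s}\in A_2$ and trace $u$ on $\{t=0\}$.  Then $\mathcal E_s(u,\Om)$ agrees, up to the normalisation $\gamma_{m,s}$, with the weighted energy of $\tilde u$ on cylinders over $\Om$, which converts the nonlocal problem into a degenerate local one on $\R^{m+1}_+$ accessible to the classical Fabes--Kenig--Serapioni theory.

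Testing the stationary condition \eqref{stationary s-harmonic map} against an inner variation, pulled back to the extension, produces a monotonicity formula: the normalised energy
\[
r\longmapsto \Theta_s(\tilde u,x_0,r):=r^{2s-m}\int_{B^+_r(x_0)} t^{1-2s}|\na\tilde u|^2 \, dx\, dt
\]
is non-decreasing, and strictly so unless $\tilde u$ is $0$-homogeneous about $x_0$.  The density $\Theta_s(\tilde u,x_0,0^+)$ thus exists and is upper semicontinuous in $x_0$.  The next step is $\varepsilon$-regularity: there is $\varepsilon_0=\varepsilon_0(m,s,d)>0$ such that $\Theta_s(\tilde u,x_0,r)<\varepsilon_0$ for some admissible $r$ forces $u\in C^\alpha$ on a neighbourhood of $x_0$, provable by a compactness/contradiction argument combined with a weighted Luckhaus-type lemma to handle the constraint $|u|=1$.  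A Morrey-type bootstrap of the Euler--Lagrange equation $(-\De)^{s}u\perp T_u\S^{d-1}$ then upgrades $C^\alpha$ to $C^\infty$, giving $u\in C^\infty(\Om\setminus\sing(u))$ with $\sing(u)=\{x\in\Om:\Theta_s(\tilde u,x,0^+)\geqq\varepsilon_0\}$ closed.

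With monotonicity and $\varepsilon$-regularity in hand, I would stratify $\sing(u)=\bigcup_{k=0}^{m-1}S_k$, where $S_k$ consists of points at which no tangent map of $u$ admits more than $k$ translation symmetries; Federer's classical reduction yields $\dim_{\mathcal H} S_k\leqq k$, and nonemptiness of the top stratum would force the existence of a nonconstant $0$-homogeneous stationary $s$-harmonic map $\R\to\S^{d-1}$.  The rigidity of such one-dimensional tangent maps drives the case split (1)--(5): for $s>1/2$ a Pohozaev identity rules them out, so $\dim_{\mathcal H}\sing(u)\leqq m-2$, with local finiteness in $m=2$ coming from strict positivity of the density at isolated singularities; for $s<1/2$ the map $x\mapsto x/|x|$ is itself a $0$-homogeneous $s$-harmonic map on $\R$, leaving only $\dim_{\mathcal H}\sing(u)\leqq m-1$, with local finiteness again in $m=1$.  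The main obstacle is the critical threshold $s=1/2$: dimension reduction alone gives only $\dim_{\mathcal H}\sing(u)\leqq m-1$, whereas the sharp statement demands $\mathcal H^{m-1}(\sing(u))=0$, and bridging this gap requires a quantitative refinement of the stratification---in the spirit of the Cheeger--Naber quantitative differentiation developed later in the present paper---to upgrade from a Hausdorff dimension estimate to a Hausdorff measure estimate.
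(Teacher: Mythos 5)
This theorem is quoted verbatim from Millot--Pegon--Schikorra \cite{Millot-Pegon-Schikorra-2021-ARMA}; the present paper does not reprove it but uses it, together with the monotonicity formula (Theorem \ref{monotonicity formula}), the $\varepsilon$-regularity (Theorem \ref{thm: partial regularity of MPS}) and the compactness of blow-ups (Theorem \ref{thm: compactness}), as black-box input. Your skeleton --- Caffarelli--Silvestre extension, monotonicity via inner variation, $\varepsilon$-regularity by compactness, Federer dimension reduction using compactness of blow-ups, and a one-dimensional tangent-map classification driving the case split --- is indeed the route MPS take for cases (1), (2), (4), (5). One small remark: for $s\geq 1/2$ the rigidity of one-dimensional $0$-homogeneous tangent maps is most cheaply obtained from the fact that a jump function on $\R$ has infinite $H^s$-seminorm (the same fact is used in the present paper's Lemma \ref{lem: Symmetry self-improvement}), so a Pohozaev identity is not needed; for $s<1/2$ the nonconstant $0$-homogeneous tangent map to keep in mind is the sign-type jump, not $x\mapsto x/|x|$, which is a map of $\R^2$, not $\R$.

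Your treatment of case (3), $s=1/2$, inverts the actual logic. You present $\dim_{\mathcal H}\sing(u)\le m-1$ as the output of dimension reduction and propose a Cheeger--Naber-style quantitative refinement to upgrade it to $\mathcal H^{m-1}(\sing u)=0$. In fact $\mathcal H^{m-2s}(\sing u)=0$ is the \emph{baseline} estimate valid for every stationary $s$-harmonic map, and it needs no blow-up analysis at all: by $\varepsilon$-regularity, $\sing(u)$ is contained in the set where the limiting density $\boldsymbol{\Theta}_s(u^{\mathrm e},\cdot,0^+)$ of the Radon measure $\mu=z^a|\nabla u^{\mathrm e}|^2\,\mathrm d\mathbf x$ exceeds $\varepsilon_1$, and since $\mu$ is absolutely continuous with respect to $\mathcal L^{m+1}$ while $\sing(u)\subset\partial\R^{m+1}_+$ has $\mathcal L^{m+1}$-measure zero, the standard upper-density covering lemma forces $\mathcal H^{m-2s}(\sing u)=0$. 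At $s=1/2$ this already gives (3). What is genuinely missing at $s=1/2$ is the opposite improvement, from $\mathcal H^{m-1}=0$ to $\dim_{\mathcal H}\le m-2$: Theorem \ref{thm: compactness} excludes stationary maps precisely at $s=1/2$, so strong tangent maps and Federer dimension reduction are unavailable there; when compactness is restored by minimality (Theorem \ref{thm: 1.3}), the $\dim_{\mathcal H}\le m-2$ bound returns. Cheeger--Naber quantitative differentiation plays no role in the proof of Theorem \ref{thm: 1.2}; it is the new ingredient of the present paper, used to pass from Hausdorff-dimension bounds to Minkowski-content and $L^p$ gradient estimates.
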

	The above regularity result can be  improved for minimizing  $s$-harmonic maps.
	\begin{theorem}   (\cite[Theorem 1.3]{Millot-Pegon-Schikorra-2021-ARMA})  \label{thm: 1.3}  Assume that $s \in(0,1)$ and $m> 2s$. If $u \in \widehat{H}^s\left(\Omega ; \mathbb{S}^{d-1}\right)$ is a minimizing $s$-harmonic map in $\Omega$, then $u \in C^{\infty}(\Omega \backslash \operatorname{sing}(u))$ and
		
		(1) for $m \geqq 3, \operatorname{dim}_{\mathcal{H}} \operatorname{sing}(u) \leqq m-2$;
		
		(2) for $m=2, \operatorname{sing}(u)$ is locally finite in $\Omega$;
		
		(3) for $m=1, \operatorname{sing}(u)=\emptyset$ (that is, $u \in C^{\infty}(\Omega)$).
	\end{theorem}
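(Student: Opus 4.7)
The plan is to follow the Schoen--Uhlenbeck--Federer scheme for minimizing harmonic maps, adapted to the nonlocal setting via the Caffarelli--Silvestre extension. First, view $u$ as the trace on $\{y=0\}$ of its weighted harmonic extension $\tilde u : \R^{m+1}_+\to \R^d$ solving $\divergence(y^{1-2s}\na \tilde u)=0$, so that $\cal{E}_s(u,\Om)$ is comparable to the weighted Dirichlet energy of $\tilde u$. Minimality of $u$ translates into a variational property for $\tilde u$ with respect to $S^{d-1}$-valued traces. At this point one derives the monotonicity of the normalized energy density
\[
\Theta_r(x_0)=\frac{1}{r^{m-2s}}\int_{B_r^+((x_0,0))}y^{1-2s}|\na \tilde u|^2\,\D x\,\D y,\qquad 0<r<\dist(x_0,\pa\Om),
\]
which is non-decreasing in $r$ (this is essentially contained in the stationarity condition \eqref{stationary s-harmonic map}; see the corresponding monotonicity in the works of Millot--Sire and Millot--Pegon--Schikorra). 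Setting $\Theta(x_0)=\lim_{r\to 0^+}\Theta_r(x_0)$, one then proves the $\varepsilon$-regularity statement: there exists $\varepsilon_0=\varepsilon_0(m,s,d)>0$ such that $\Theta_r(x_0)<\varepsilon_0$ for some admissible $r$ implies $u\in C^\infty$ in a neighborhood of $x_0$. Consequently $\sing(u)\subset\{x\in\Om:\Theta(x)\geq\varepsilon_0\}$, which, together with upper semicontinuity of $\Theta$, shows $\sing(u)$ is relatively closed in $\Om$.

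Next I would carry out the tangent map analysis. For $x_0\in\sing(u)$ and a sequence $r_j\downarrow 0$, consider the rescaled maps $u_{x_0,r_j}(x):=u(x_0+r_j x)$ and their extensions. Using the uniform energy bound from monotonicity together with the compactness of minimizing $s$-harmonic maps (the variational convergence and strong $\widehat{H}^s$-compactness developed in \cite{Millot-Pegon-Schikorra-2021-ARMA}), one extracts a subsequential limit $\phi\in\widehat{H}^s_{\loc}(\R^m,\S^{d-1})$ which is again a minimizing $s$-harmonic map and, by the equality case of the monotonicity formula, is $0$-homogeneous: $\phi(\la x)=\phi(x)$ for all $\la>0$. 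The density $\Theta_\phi\equiv\Theta(x_0)\geq\varepsilon_0$ is constant on radial lines.

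The core classification step, yielding (3), is to prove that any $0$-homogeneous minimizing $s$-harmonic map $\phi:\R\to\S^{d-1}$ is necessarily constant. Such a $\phi$ is a two-valued step function $\phi=a\chi_{\{x>0\}}+b\chi_{\{x<0\}}$ with $a,b\in\S^{d-1}$; for $s<1/2$ this has finite $\cal{E}_s$ on any bounded interval, so the minimality comparison is delicate and must be treated quantitatively. If $a\neq b$ one constructs an explicit radial competitor, obtained by smoothing the jump on a small scale $\de$ via a geodesic arc in $\S^{d-1}$ and cutting off outside, and compares $\cal{E}_s$ of this competitor with that of $\phi$ on a fixed large interval using the $s$-Poincar\'e type estimate and the scaling $\cal{E}_s(\phi_\de,I)=\cal{E}_s(\phi,I)-c(a,b)\de^{1-2s}+o(\de^{1-2s})$; strict inequality for small $\de$ contradicts minimality, forcing $a=b$. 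Hence $\Theta(x_0)=\Theta_\phi=0$, contradicting $\Theta(x_0)\geq\varepsilon_0$; thus $\sing(u)=\emptyset$ when $m=1$, which combined with Theorem \ref{thm: 1.1} and the analogous smoothness in the third bullet of Theorem \ref{thm: 1.2} establishes (3).

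Finally, assertions (1) and (2) follow from the Federer dimension reduction argument: having ruled out nonconstant $0$-homogeneous tangent maps in one dimension, any $0$-homogeneous tangent map at a singular point must itself have a nonempty singular set, which by homogeneity is a cone. Translating along its spine and iterating the blow-up produces tangent maps that are homogeneous and constant along a linear subspace of dimension $k$. The absence of $1$-dimensional singular minimizing tangent maps forces $k\leq m-2$, whence $\dim_{\cal H}\sing(u)\leq m-2$, giving (1); for $m=2$ this upper bound is $0$ and the combination with closedness and the upper semicontinuity of $\Theta$ yields local finiteness of $\sing(u)$, giving (2). The principal obstacle in the whole scheme is the classification step for $s<1/2$: because step functions lie in $\widehat{H}^s$ the energy comparison is no longer automatic as in the local ($s=1$) case, and the competitor must be built and estimated carefully so as to extract the correct $\de^{1-2s}$ gain.
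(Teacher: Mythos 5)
This theorem is stated in the paper purely as a quotation of \cite[Theorem 1.3]{Millot-Pegon-Schikorra-2021-ARMA}; the paper offers no proof of it and in fact uses assertion (3) as a black box (see the end of the proof of Lemma \ref{lem: Symmetry self-improvement}, where it is invoked to conclude continuity of the one-dimensional blow-up limit when $0<s\le 1/2$). Your sketch is a faithful outline of the Schoen--Uhlenbeck--Federer dimension-reduction scheme that underlies the actual Millot--Pegon--Schikorra argument: Caffarelli--Silvestre extension, monotonicity, $\varepsilon$-regularity, compactness and blow-up to $0$-homogeneous tangent maps, classification in the lowest dimension, then Federer dimension reduction for (1) and (2). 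On that structural level there is nothing to object to.

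There is, however, a genuine gap at the step you yourself call ``the principal obstacle'', namely the classification of $0$-homogeneous minimizing $s$-harmonic maps $\phi:\R\to\S^{d-1}$ when $0<s<1/2$. For $s\ge 1/2$ the classification is automatic because a non-constant jump has $[\phi]_{H^s((-1,1))}=+\infty$ (this is the content of \cite[Lemma 7.12]{Millot-Pegon-Schikorra-2021-ARMA}, as the paper itself notes). For $s<1/2$ a jump map has \emph{finite} local energy, so minimality must genuinely be invoked, and your proposed comparison does not deliver what you claim. A geodesic interpolation $\phi_\de$ of the jump over $(-\de,\de)$ produces three effects of the \emph{same} order $\de^{1-2s}$: a gain from removing the jump--jump interaction across the origin, a loss from the new interaction between the transition zone and the two constant plateaus (e.g.\ for $x\in(0,\de)$ and $y>\de$ the energy goes from $0$ to $|\phi_\de(x)-a|^2$), and a transition-zone self-energy of order $\de^{-2}\iint_{(-\de,\de)^2}|x-y|^{1-2s}\,\mathrm{d}x\,\mathrm{d}y\sim\de^{1-2s}$. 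The sign of the net change therefore hinges on explicit dimensionless constants that you have not computed, and the expansion $\cal E_s(\phi_\de,I)=\cal E_s(\phi,I)-c(a,b)\de^{1-2s}+o(\de^{1-2s})$ is asserted rather than established. This classification is precisely the nontrivial input supplied by Millot--Sire--Yu \cite{Millot-Sire-Yu-18}, via a more delicate comparison than a geodesic smoothing. Without it, (3) is not proved, and the dimension reduction for (1) and (2) loses its base case.
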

	In  case the target sphere satisfies $d\ge 3$, Millot and Pegon \cite[Theorem 1.3]{Millot-Pegon-2020-CV} proved that the singular set of a minimizing $1/2$-harmonic map is even smaller:  $\operatorname{dim}_{\mathcal{H}} \operatorname{sing}(u) \leq m-3$. A similar result for minimizing $s$-harmonic maps with $s\in (0,1/2)$ can also be found in Millot, Sire and Yu \cite{Millot-Sire-Yu-18}.
	
	\subsection{Main results} Since regularity theory is basic for studying fractional harmonic mappings,   in this paper,  we aim to  derive volume estimates for the singular sets $\sing(u)$  by making use  of the quantitative differentiation approach of Cheeger and Naber \cite{Cheeger-Naber-2013-CPAM},  from which a global regularity estimate for $s$-harmonic mappings will follow. This approach has been applied to e.g. harmonic mappings, biharmonic mappings, varifolds and  currents, harmonic map flow, mean curvature flow and so on, see e.g. \cite{Breiner-Lamm-2015,Cheeger-H-N-2013,Cheeger-H-N-2015,Cheeger-Naber-2013-Invent,Cheeger-Naber-2013-CPAM,Naber-V-2020-JEMS,Naber-V-V-2019} and the references therein. To state our results precisely, let us first introduce some necessary notation. Given $\Lambda>0$, denote
	\begin{equation}\label{H_Lambda^s}
		\widehat{H}_\Lambda^{s}(\Om,N)=\left\{ u\in L_{\loc}^{2}(\R^{m},N):  {\cal E}_{s}(u,\Om)<\Lambda\right\}.
	\end{equation}
	To avoid confuse with the balls in $\R^{m+1}$, we use
	$$D_r(x)=\{y\in \R^m: |y-x|<r\}$$
	to denote the ball in $\R^m$ centered at $x$ with radius $r$, and simply write $D_r=D_r(0)$. Our first result reads as follows.
	\begin{theorem}[Integrability estimates]\label{thm: integrability}
		Given $\Lambda>0$, $m\ge 2$, and  assume  $u\in \widehat{H}_\Lambda^s(D_{4},\S^{d-1})$ is a stationary $s$-harmonic map if $1/2<s<1$, or  $u\in \widehat{H}_\Lambda^s(D_{4},\S^{d-1})$ is a minimizing $s$-harmonic map if $0<s\le 1/2$.
		Then for all $1\leq p<2$, there exists $C=C(s,m,\Lambda,p)$ such that
		\[
		\int_{D_1}  |\nabla u|^p\leq C\int_{D_1} r_u^{-p}<C.
		\]
	\end{theorem}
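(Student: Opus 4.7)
The plan is to reduce the two-part inequality to a single Minkowski-type volume estimate for sub-level sets of the regularity scale $r_u$, which in turn is to be established via Cheeger--Naber quantitative stratification. First I would set up $r_u(x)$ as the largest radius $\rho\in (0,1]$ such that $u$ is smooth on $D_\rho(x)$ with $\sup_{D_\rho(x)}|\nabla u|\le \rho^{-1}$, so that pointwise $|\nabla u(x)|\le r_u(x)^{-1}$ on $D_1\setminus\sing(u)$ and $r_u(x)=0$ on $\sing(u)$. The first inequality $\int_{D_1}|\nabla u|^p\le \int_{D_1}r_u^{-p}$ is then immediate from the definition.

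The substantive content is to show $\int_{D_1} r_u(x)^{-p}\,\D x\le C$ for every $1\le p<2$. By the layer-cake identity
\[
\int_{D_1} r_u(x)^{-p}\,\D x \;=\; p\int_0^\infty r^{-p-1}\,\bigl|\{x\in D_1 : r_u(x)<r\}\bigr|\,\D r,
\]
the claim reduces to a Minkowski-type estimate
\[
\bigl|\{x\in D_1 : r_u(x)<r\}\bigr| \;\le\; C\, r^{2}, \qquad 0<r<1,
\]
since the resulting integrand behaves like $r^{1-p}$ near the origin (integrable exactly for $p<2$) and like $r^{-p-1}$ at infinity (integrable using the trivial bound $|\{r_u<r\}|\le |D_1|$).

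I would produce this Minkowski estimate by the three-pillar Cheeger--Naber machinery \cite{Cheeger-Naber-2013-CPAM} adapted to $s$-harmonic maps: \emph{(a)} an $\epsilon$-regularity theorem asserting that if $u$ is sufficiently close to some $(m-1,\epsilon)$-symmetric map on $D_r(x)$, then $u$ is smooth on $D_{r/2}(x)$ with $|\nabla u|\le C r^{-1}$, the essential input being that an $(m-1)$-symmetric tangent map is in effect $1$-dimensional and hence smooth by Theorems \ref{thm: 1.1} and \ref{thm: 1.3}; \emph{(b)} a quantitative cone-splitting plus quantitative differentiation lemma based on the almost-monotonicity of the Caffarelli--Silvestre extended energy $\rho\mapsto \rho^{2s-m}\int_{B_\rho^+(x,0)} y^{1-2s}|\nabla \widetilde u|^{2}\,\D x\,\D y$, which bounds the number of ``bad'' scales at which $u$ fails to be $(k+1,\epsilon)$-symmetric in terms of $\Lambda$; \emph{(c)} a standard covering argument combining (a) and (b) to control the Minkowski content of the top quantitative singular stratum $\mathcal{S}^{m-2}_{\epsilon,r}(u)\supseteq\{x\in D_1 : r_u(x)<r\}$ by $C r^{2}$.

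The main obstacle, and where the nonlocal character of the problem creates genuine new difficulty, is pillar (b): the natural monotone quantity lives on the harmonic extension in $\R^{m+1}_+$ rather than on $\R^m$ itself, so quantitative symmetry of $u$ must be formulated for its extension $\widetilde u$ and transferred back across the trace, and the defect of monotonicity has to be quantitatively bounded by $L^2$-closeness to symmetric profiles on a half-ball. Once this extension-based quantitative stratification is secured, the integrability estimate falls out of the layer-cake computation above.
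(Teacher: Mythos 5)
Your proposal follows essentially the same route as the paper: reduce the integrability claim, via the pointwise bound $|\nabla u|\le r_u^{-1}$ and the layer-cake formula, to a Minkowski-type volume estimate on the sub-level set $\{x\in D_1: r_u(x)<r\}$, and then obtain that estimate from the Cheeger--Naber quantitative stratification built on the Caffarelli--Silvestre extension's monotone quantity, cone-splitting, a covering argument, and an $\epsilon$-regularity theorem keyed to quantitative $(m-1)$-symmetry, which in turn rests on the one-dimensional smoothness in Theorems~\ref{thm: 1.1} and~\ref{thm: 1.3}; this is precisely the content of Sections~\ref{sec: Quantitative stratification and volume estimates}--\ref{sec: regularity results}, and the paper's own proof of Theorem~\ref{thm: integrability} is then just the one-line reduction to Theorem~\ref{thm: regularity scale estimate}. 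One small correction: the Cheeger--Naber machinery delivers only $\operatorname{Vol}(\{r_u<r\}\cap D_1)\le C_\eta\, r^{2-\eta}$ for every $\eta>0$ (Theorem~\ref{thm: regularity scale estimate}), not the clean $Cr^{2}$ you assert; this weaker bound still suffices, since for any fixed $1\le p<2$ one takes $0<\eta<2-p$ so that $\int_0^1 r^{-p-1}\, r^{2-\eta}\,\D r<\infty$, but it is worth stating precisely because the sharp $r^2$ bound (and rectifiability) would require the heavier Naber--Valtorta-type analysis, as the paper itself notes.
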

	Here $r_u$ is the regularity scale of $u$ defined as follows.
For a given function  $f:\Om\to \R$, we define the regularity scale function of $f$  by
	\begin{equation}\label{def: regularity scale function}
		r_{f}(x)=\max\left\{ 0\leq r\leq 1:\sup_{y\in D_{r}(x)}r|\na f(y)|\le1\right\},
	\end{equation}
	and define the set of points of $f$ with bad regularity scales by
	\[\mathcal{B}_r(f)=\{x\in \Om: r_f(x)< r\}.\]
Then 	Theorem \ref{thm: integrability} follows from the volume estimate below.
	\begin{theorem}\label{thm: regularity scale estimate}
		Given $\Lambda>0$,  $m\geq 2$, and  assume  $u\in \widehat{H}_\Lambda^s(D_{4},\S^{d-1})$ is a stationary $s$-harmonic map if $1/2<s<1$, or  $u\in \widehat{H}_\Lambda^s(D_{4},\S^{d-1})$ is a minimizing $s$-harmonic map if $0<s\le 1/2$. Then, for all $\eta>0$, there exists $C=C(s,m,\Lambda,\eta)$ such that
		\[
		\operatorname{Vol}(T_r(\mathcal{B}_r(u))\cap D_1)\leq C r^{2-\eta},\qquad \forall\,0<r<1,
		\]
		where $T_r(A)$ denotes the $r$ tubular neighborhood of a set $A$ in $\mathbb{R}^m$. Consequently, this implies that the Minkowski dimension of $ \sing(u)$ satisfies
		\[
		\dim_{\operatorname{Min}}  \sing(u)\leq m-2.
		\]
	\end{theorem}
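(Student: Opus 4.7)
The plan is to follow the quantitative stratification scheme of Cheeger--Naber, adapted to the fractional setting via the Caffarelli--Silvestre extension that Millot--Pegon--Schikorra and Millot--Sire have already set up for this class of maps. The first step is to pass from $u$ on $\R^m$ to its $s$-harmonic extension $\tilde u$ on the half-space $\R^{m+1}_+$, which satisfies a degenerate elliptic equation with weight $|x_{m+1}|^{1-2s}$ and inherits from the stationarity (respectively minimality) of $u$ an almost-monotonic normalized Dirichlet energy $\Theta_u(x,r)$ for $x\in D_1\times\{0\}$. This quantity plays the role of the Almgren frequency and, via a standard compactness argument, furnishes the key rigidity statement: on scales where $\Theta_u(x,r_1)-\Theta_u(x,r_2)<\epsilon$, blow-ups are $L^2$-close to a $0$-homogeneous (hence $0$-symmetric) tangent map which is itself stationary or minimizing on every scale.

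Following Cheeger--Naber one declares $u$ to be $(\epsilon,r,k)$-symmetric at $x$ if there is a $k$-symmetric map $h$, i.e.\ a $0$-homogeneous map about some center that is invariant under translation by a $k$-plane $V\subseteq\R^m$, such that
\[
r^{-m}\int_{D_r(x)}|u-h|^2\,dy<\epsilon,
\]
and defines the quantitative strata
\[
S^{k}_{\epsilon,r}(u)=\bigl\{x\in D_1:\ u\ \text{is not }(\epsilon,t,k+1)\text{-symmetric at }x\ \text{for any}\ t\in[r,1]\bigr\}.
\]
The cone-splitting lemma, which says that if $u$ is $0$-symmetric with respect to two centers at definite distance then it is $1$-symmetric, combined with the almost-monotonicity of $\Theta_u$ and a Reifenberg-type covering argument, then yields, for every $\eta>0$,
\[
\operatorname{Vol}\bigl(T_r(S^{k}_{\epsilon,r}(u))\cap D_1\bigr)\le C(s,m,\Lambda,\epsilon,\eta)\,r^{m-k-\eta}.
\]

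The second crucial ingredient is an $\epsilon$-regularity statement: there is $\epsilon_0=\epsilon_0(s,m,\Lambda)>0$ so that if $u$ is $(\epsilon_0,r,m-1)$-symmetric at $x\in D_1$, then $r_u(x)\ge cr$. This is where the hypotheses on $s$ enter: an $(m-1)$-symmetric tangent map essentially depends on one variable and is $0$-homogeneous, so its profile is a $1$-dimensional stationary (respectively minimizing) $s$-harmonic map with values into $\mathbb S^{d-1}$; by Theorem \ref{thm: 1.1} in the case $s\ge 1/2$ and by the $m=1$ case of Theorem \ref{thm: 1.3} in the case $s<1/2$, such a map is smooth and thus, being $0$-homogeneous, constant. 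A standard contradiction-compactness argument then promotes $L^2$-closeness to a constant into a pointwise bound on $|\nabla u|$ at scale $r$ through Schauder estimates for the extension. The inclusion $\mathcal B_r(u)\cap D_1\subseteq S^{m-2}_{\epsilon_0,cr}(u)\cap D_1$ thus holds, and the desired volume estimate follows by applying the stratum bound with $k=m-2$ (after readjusting $\eta$ to absorb the constant $c$). The Minkowski dimension estimate for $\sing(u)$ is an immediate consequence.

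The main obstacle I anticipate is the $\epsilon$-regularity step rather than the combinatorial stratification. Two points need genuine work: first, verifying that every $(m-1)$-symmetric tangent map is trivial, which requires handling both the integer tangent center and the translation invariance simultaneously so that Theorem \ref{thm: 1.1} or the $m=1$ case of Theorem \ref{thm: 1.3} can be applied to a genuine $1$-dimensional problem; second, transferring the smoothness/regularity information back from the $(m{+}1)$-dimensional Caffarelli--Silvestre extension to a scale-invariant gradient bound on $u$ in the original base $D_{r/2}(x)$, which forces one to combine the boundary Schauder theory for the degenerate equation with the interior smoothness theorems \ref{thm: 1.2}--\ref{thm: 1.3}. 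The cone-splitting in the extended setting is essentially formal because symmetries of the trace induce symmetries of the harmonic extension, but some care is needed as the relevant balls now live in $\R^{m+1}_+$.
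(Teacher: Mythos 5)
Your proposal follows essentially the same route as the paper: Caffarelli--Silvestre extension, monotonicity of the normalized extended energy, quantitative rigidity and cone splitting giving the stratum volume bound, and an $\epsilon$-regularity step reducing $(m-1)$-symmetry to a $1$-dimensional $s$-harmonic map that is classified as constant via Theorem~\ref{thm: 1.1} (for $s\ge 1/2$) and Theorem~\ref{thm: 1.3}(3) (for $s<1/2$), precisely as in the paper's Lemmas~\ref{lemma: new epsilon regularity}--\ref{lem: Symmetry self-improvement}. The only notable technical deviation is that you measure quantitative symmetry with an $L^2$ average over $D_r(x)\subset\R^m$, whereas the paper measures it with an $L^{\gamma_0}$ average of the extension over the half-ball $B_r^+(\mathbf{x})\subset\R^{m+1}_+$ (with $\gamma_0<\min\{1/(1-s),2\}$) so that the compact embedding~\eqref{eq: comp-embedding} and the unique-continuation argument on $\R^{m+1}_+$ apply directly; your base-ball formulation could be made to work, but the extension-side formulation is the one for which the compactness and rigidity steps go through cleanly, and you should flag this when fleshing out the argument.
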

	
	We remark that the above result does not hold for stationary $s$-harmonic maps if $0<s<1/2$. To see this, consider the map $u(x)=\chi_{\R^{m}_+}-\chi_{\overline{\R^{m}_-}}$ which has its origin in nonlocal minimal surface (see e.g. \cite{Caffarelli-R-S-10-CPAM,Savin-Valdinoci-14-JMPA}). It was proven that $u$ is a stationary $s$-harmonic map for $0<s<1/2$ in $\S^1$,  but with $m-1$ dimensional singular set with infinite $\cal{H}^{m-1}$-measure, see e.g. \cite[Remark 1.5]{Millot-Pegon-Schikorra-2021-ARMA}.
	
	Combining  the improved  estimate in Millot and Pegon \cite[Theorem 1.3]{Millot-Pegon-2020-CV}, we can improve the above theorems for minimizing $1/2$-harmonic maps as in the below.
	\begin{theorem}\label{thm: regularity scale estimate-2}
		Given $\Lambda>0$ and assume that  $u\in \widehat{H}_\Lambda^{1/2}(D_{4},\S^{d-1})$ is a minimizing $1/2$-harmonic map with $d\ge 3$. Then, for all $\eta>0$, there exists $C=C(s,m,N,\Lambda,\eta)$ such that
		\[
		\operatorname{Vol}(T_r(\mathcal{B}_r(u))\cap D_1)\leq C r^{3-\eta},\qquad \forall\,0<r<1,
		\]
		As a result, we have
		$ 	\dim_{\operatorname{Min}} \sing(u)\leq m-3$; furthermore,  there exists a constant $C=C(s,m,\Lambda,p)>0$ for each $1\leq p<3$ such that
	$		\int_{D_1}  |\nabla u|^p\leq C\int_{D_1} r_u^{-p}<C.$
	\end{theorem}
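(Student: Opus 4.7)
The plan is to rerun the quantitative stratification scheme that produces Theorem \ref{thm: regularity scale estimate}, but with the top effective symmetry stratum lowered by one dimension. The crucial additional input is the Millot--Pegon bound $\dim_{\mathcal{H}}\sing(u)\le m-3$ for minimizing $1/2$-harmonic maps into $\mathbb{S}^{d-1}$ with $d\ge 3$ (\cite[Theorem 1.3]{Millot-Pegon-2020-CV}). At the qualitative level this is essentially the statement that no non-constant tangent map of such a $u$ is $(m-2)$-symmetric, i.e., splits off an $(m-2)$-dimensional linear factor of translations. Combined with the $\varepsilon$-regularity and monotonicity formulae already invoked in the $r^{2-\eta}$ case, this excludes $(m-2)$-symmetric quantitative behaviour and thereby allows us to replace the stratum $\mathcal{S}^{m-2}_{\eta,r}$ by $\mathcal{S}^{m-3}_{\eta,r}$ throughout the argument.

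First I would check that the cone-splitting lemma, the quantitative rigidity lemma, and the inductive covering scheme underlying Theorem \ref{thm: regularity scale estimate} apply verbatim in the present setting, since only the maximal symmetry of tangent maps has changed while the compactness, the monotone quantity, and the $\varepsilon$-regularity theorem remain unchanged. Next I would upgrade Millot--Pegon's qualitative result to its quantitative form: by a standard contradiction-and-compactness argument, for every $\eta>0$ there exists $\varepsilon>0$ such that no point in $D_1$ is simultaneously $(m-2,\varepsilon)$-symmetric and has a non-trivial normalised energy drop at scale $r$; otherwise a sequence of hypothetical counter-examples, after rescaling and extraction, would subconverge in the Millot--Pegon topology to a non-constant $(m-2)$-symmetric minimizing $1/2$-harmonic map, contradicting \cite[Theorem 1.3]{Millot-Pegon-2020-CV}. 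From this exclusion I would deduce the regularity-scale inclusion $\mathcal{B}_r(u)\cap D_1\subseteq \mathcal{S}^{m-3}_{\eta,r}(u)\cap D_1$, and the Cheeger--Naber covering theorem then gives
\[
\operatorname{Vol}\bigl(T_r(\mathcal{B}_r(u))\cap D_1\bigr)\le C r^{3-\eta},\qquad 0<r<1,
\]
from which the Minkowski bound $\dim_{\operatorname{Min}}\sing(u)\le m-3$ is immediate.

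For the $L^p$ integrability I would invoke $|\nabla u(x)|\le r_u(x)^{-1}$ (which follows directly from \eqref{def: regularity scale function}) together with the layer-cake identity
\[
\int_{D_1} r_u^{-p}\,dx \;\le\; |D_1| \;+\; p\int_0^1 r^{-p-1}\,\operatorname{Vol}(\mathcal{B}_r(u)\cap D_1)\,dr.
\]
Plugging in the volume estimate above with any $\eta\in(0,3-p)$ reduces the right-hand side to the convergent integral $C\int_0^1 r^{2-\eta-p}\,dr$, so $\int_{D_1}|\nabla u|^p\le \int_{D_1} r_u^{-p}<C$ for every $1\le p<3$.

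The main obstacle I expect is the quantitative promotion of Millot--Pegon's dimension estimate. One must verify that the compactness theory for minimizing $1/2$-harmonic maps (weak convergence in $\widehat{H}^{1/2}$, convergence of defect measures, smooth convergence off the singular set) holds uniformly under the energy bound $\Lambda$ and is compatible with the definition of $(k,\varepsilon)$-symmetry used in the quantitative stratification; this requires care because the Caffarelli--Silvestre extension framework employed in \cite{Millot-Pegon-2020-CV} must be matched with the extrinsic symmetry quantity used in the Cheeger--Naber machinery. Once this compatibility is secured, passing from $\dim_{\mathcal{H}}\le m-3$ to the quantitative exclusion of almost-$(m-2)$-symmetric tangent behaviour is routine, and the remainder of the proof reduces to replaying the $k=m-2$ argument with $k=m-3$.
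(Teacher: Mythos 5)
Your proposal is correct and follows the same route the paper indicates: the paper's own proof of Theorem~\ref{thm: regularity scale estimate-2} is ``Totally similar to the above, and omitted,'' referring to the proof of Theorem~\ref{thm: regularity scale estimate}, with exactly the modification you identify. One small remark on precision: the quantitative promotion you need is not best phrased as ruling out a ``non-trivial normalised energy drop'' alongside approximate symmetry, but rather as the $(m-2)$-analogue of Theorem~\ref{thm: sym-to-reg} (equivalently of Lemma~\ref{lem: Symmetry self-improvement}): there is $\delta>0$ such that if $u$ is $(m-2,\delta)$-symmetric on $D_2$ then $r_u(0)\ge\kappa_2$. Your contradiction-and-compactness sketch is indeed the right way to prove this -- the limit would be a minimizing $1/2$-harmonic map of the form $u(x)=v(x_1,x_2)$ with $v$ $0$-homogeneous on $\R^2$, and the Millot--Pegon classification behind \cite[Theorem 1.3]{Millot-Pegon-2020-CV} (no non-constant $0$-homogeneous minimizing $1/2$-harmonic map $\R^2\to\mathbb{S}^{d-1}$ when $d\ge 3$) forces $v$ to be constant, so $u$ is $(m,\epsilon_0)$-symmetric, a contradiction. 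From that lemma one gets $\mathcal{B}_r(u)\subset\mathcal{S}^{m-3}_{\eta,2r/\kappa_2}(u)$, and the covering machinery (Lemmas~\ref{quantitative rigidity}, \ref{quantitative cone splitting}, \ref{covering lemma}, Theorem~\ref{volume estimate}) and the layer-cake argument for the $L^p$ bound apply unchanged, as you correctly note.
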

	
	To deduce the above regularity estimates, the key is to prove the following volume estimate concerning quantitative singular set $\mathcal{S}_{\eta,r}^k(u)$ (see Definition \ref{def: qs}).
	\begin{theorem}[Volume estimate of singular set]\label{volume estimate}
		Let $\La>0$, $k\in \{0,1,\cdots,m-1\}$,  $s\in (0,1)$ and  $m>2s$. Then, for all $\eta>0$ there exists $C=C(m,s,N,\Lambda,\eta)>0$ such that, for all stationary $s$-harmonic map $u\in \widehat{H}_\Lambda^s(D_{4},N)$ and all $0<r<1$,  we have
		\begin{equation}
			\operatorname{Vol}(T_r(\mathcal{S}_{\eta,r}^k(u))\cap D_1)\leq C r^{m-k-\eta}.
		\end{equation}
	\end{theorem}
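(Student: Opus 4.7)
The plan is to follow the Cheeger--Naber quantitative stratification scheme \cite{Cheeger-Naber-2013-CPAM}, adapted to the fractional setting via the Caffarelli--Silvestre extension that Millot--Pegon--Schikorra already exploit. The first ingredient is a monotone normalized energy: for a stationary $s$-harmonic map $u$, passing to the extension $\tilde u$ on $\R^m\times (0,\infty)$ with weight $|x_{m+1}|^{1-2s}$, one has a monotonicity formula for the normalized Dirichlet energy $\Theta_r(x):=r^{2s-m}{\cal E}_s(u,D_r(x))$ (or its extension analogue), which is nondecreasing in $r$ and uniformly bounded by $C\Lambda$ on $D_1$ once the energy is controlled on $D_4$. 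One then declares a ball $D_r(x)$ to be $(k,\eta)$-symmetric if $u|_{D_r(x)}$ is $L^2$-close, after rescaling, to a map which is invariant along a $k$-plane and $0$-homogeneous about $x$ in the transverse directions (and in the extension variable); the quantitative singular stratum $\mathcal{S}^k_{\eta,r}(u)$ consists of those $x$ for which no $(k+1,\eta)$-symmetric ball $D_{r'}(x)$ exists for $r\le r'\le 1$.

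With this setup, the first key lemma is a quantitative rigidity/compactness statement: if $|\Theta_{2r}(x)-\Theta_{r/2}(x)|<\delta(\eta)$ then $D_r(x)$ is $(0,\eta)$-symmetric. The proof is by contradiction and a blow-up argument, using compactness of stationary $s$-harmonic maps with uniformly bounded energy (provided by the Millot--Pegon--Schikorra partial regularity theory, in particular their strong compactness and $\varepsilon$-regularity) together with the fact that in the limit the monotonicity defect vanishes, forcing the limit tangent map to be $0$-homogeneous. The second key lemma is a cone-splitting principle: if $D_{2r}(x)$ is $(0,\delta)$-symmetric and there are points $y_1,\dots,y_k\in D_r(x)$, $\delta^{-1}$-independent in the affine sense, such that each $D_{2r}(y_j)$ is also $(0,\delta)$-symmetric, then $D_r(x)$ is $(k,\eta)$-symmetric. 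Again the proof is a compactness--contradiction argument, using the fact that a $0$-homogeneous map which is $0$-homogeneous about a second point is translation invariant along the line joining the two.

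The third ingredient is quantitative differentiation: by the monotonicity of $\Theta_r$ and $\Theta_r\le C\Lambda$, for any $\delta>0$ the number of scales $r_i=2^{-i}$ at which $\Theta_{2r_i}(x)-\Theta_{r_i/2}(x)>\delta$ is at most $N(\Lambda,\delta)$. Combining this with the rigidity lemma, at each $x\in D_1$ all but $N(\Lambda,\delta)$ of the dyadic scales $r\ge r_0$ are $(0,\eta)$-symmetric. The main covering argument then proceeds by a downward induction on scales in the spirit of Cheeger--Naber: at each scale $r_i$ one covers $\mathcal{S}^k_{\eta,r_0}\cap D_1$ by balls $D_{r_i}(x_\alpha)$ and, using cone-splitting, shows that the centers of the balls needed to refine the cover to scale $r_{i+1}$ are either contained in a thin neighborhood of a $k$-plane in $D_{r_i}(x_\alpha)$ (because otherwise one would obtain $(k+1,\eta)$-symmetry, contradicting $x\in \mathcal{S}^k_{\eta,r_0}$) or live in a ball where an energy drop $>\delta$ occurs. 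Summing the two contributions over the at most $\log_2(1/r)$ scales and using the bound $N(\Lambda,\delta)$ for the number of drop scales yields a covering by at most $C(\eta)r^{-k-\eta}$ balls of radius $r$, which is exactly the claimed bound $\operatorname{Vol}(T_r(\mathcal{S}^k_{\eta,r}(u))\cap D_1)\le Cr^{m-k-\eta}$.

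The main obstacle is the rigidity/cone-splitting step in the fractional setting. The monotonicity formula lives on the extension in $\R^{m+1}_+$ with a weighted Dirichlet energy, so $0$-homogeneity, tangent map existence, and the characterization ``vanishing monotonicity defect $\Rightarrow$ homogeneous'' must be formulated for the extended map and transferred back to $u$. One must check that the compactness class $\widehat H^s_\Lambda(D_4,\S^{d-1})$ is closed under weak limits of stationary/minimizing maps with the relevant convergence mode (this is essentially available from \cite{Millot-Pegon-Schikorra-2021-ARMA,Millot-Sire-15}), and that translation invariance of a tangent map along a line is quantitatively preserved under small perturbations of the basepoints in the weighted $L^2$ sense. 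Once these fractional versions of the two compactness lemmas are in place, the Cheeger--Naber covering argument goes through essentially verbatim.
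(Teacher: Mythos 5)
Your proposal follows the same Cheeger--Naber quantitative-stratification route that the paper uses: monotonicity of the normalized extension energy via the Caffarelli--Silvestre lift, a quantitative rigidity lemma, a quantitative cone-splitting lemma, quantitative differentiation to bound the number of ``bad'' scales by $N(\Lambda,\delta)$, and a dyadic covering argument. One small remark worth flagging: the paper's rigidity and cone-splitting lemmas rely only on weak compactness of $\widehat H^s_\Lambda(D_4,N)$, lower semicontinuity in the monotonicity identity, the unique-continuation theorem of Garofalo--Lin for the extension equation, and the compact embedding $H^1(B^+,|z|^a\,\mathrm d\mathbf{x})\hookrightarrow\hookrightarrow L^{\gamma_0}(B^+)$; they do \emph{not} need the limit to again be a stationary $s$-harmonic map, which is why Theorem~\ref{volume estimate} is stated for a general closed target $N$ --- invoking the strong compactness theory of Millot--Pegon--Schikorra, as you do, would implicitly restrict to sphere targets and to the stationary/minimizing regimes where that compactness is available.
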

	Note that, unlike that of Theorems \ref{thm: integrability}, \ref{thm: regularity scale estimate} and \ref{thm: regularity scale estimate-2},  this volume estimate in Theorem \ref{volume estimate} holds for all  stationary $s$-harmonic maps $u\in \widehat{H}_\Lambda^s(D_{4},\S^{d-1})$, and also for a general closed Riemannian manifold $N\hookrightarrow\R^d$, due to the fact that the monotonicity formula \eqref{monotonicity formular 1} holds for all such targets. As aforementioned, to prove this volume estimate, we will use the approach of Cheeger and Naber \cite{Cheeger-Naber-2013-CPAM}. However, different from the situations in \cite{Cheeger-Naber-2013-CPAM,Breiner-Lamm-2015}, in our case we first need to extend the mapping from $\R^m$ into the upper half space $\R^{m+1}_+$ so as to get  monotonicity formula  \eqref{monotonicity formular 1}, and then we need to defined a new type of quantitative symmetry so as to match the monotonicity property,  and finally we will  establish quantitative cone splitting principles so as to  find a useful cover for the quantitative singular  set $\mathcal{S}_{\eta,r}^k(u)$. For details of the proof of Theorem \ref{volume estimate}, see Section  \ref{sec: Quantitative stratification and volume estimates}.

Once Theorem \ref{volume estimate} is obtained, Theorems \ref{thm: integrability}, \ref{thm: regularity scale estimate}, \ref{thm: regularity scale estimate-2} will follow from  an $\ep$-regularity result (see Theorem \ref{thm: sym-to-reg} below) and the volume estimates of Theorem \ref{volume estimate}. Details are given in Section \ref{sec: regularity results}. To prove  Theorem \ref{thm: sym-to-reg}, we have  to assume that the    $s$-harmonic mapping $u$  is either stationary or minimal according to the range of $s$ so as to use the compactness results of \cite{Millot-Pegon-Schikorra-2021-ARMA}.
	
	Befor ending this section, we remark that the above regularity theorems are sharp in a sense. To see this, consider the $s$-harmonic map $u(x)=x/|x|$ for $x\in D_4\subset \R^2$ (see e.g. \cite[Remark 1.6]{Millot-Pegon-Schikorra-2021-ARMA}). A simple computation shows that our  result    is optimal in the sense $\na u\in L^p_\loc$ for all $1<p<2$ but not for $p=2$. However, note that $\na u$ is weakly $L^{2}$ integrable. We shall deduce this even more sharp regularity result together with the rectifiability of the stratified singular set in another paper, applying the much more sophisticated approach of Naber and Valtorta \cite{Naber-V-2017,Naber-V-2018} on harmonic mappings;   see also  \cite{GJXZ-2024} on the global regularity of biharmonic mappings for instance.
	
	\medskip	
	
	{\bf Notation.} Throughout the paper, we will use the following notations:
	
	$\bullet$ $\mathbb{R}_{+}^{m+1}=\{\mathbf{x}=(x,z):x\in \R^m, z>0\}$ denotes the $n+1$ dimensional open upper half space;
	
	$\bullet$ $B_r(\mathbf{x})$ denotes the open ball in $\mathbb{R}^{m+1}$  with radius $r$ centered at $\mathbf{x}=(x, z)$;
	
	$\bullet$ $B_r^{+}(\mathbf{x})$ denotes the half open ball in $\mathbb{R}_{+}^{m+1}$ of radius $r$ centered at $\mathbf{x}=(x, 0)$, and simply write $B^+_r=B^+_r(0)$;
	
	$\bullet$ $D_r(x)$ denotes the the open ball/disk in $\mathbb{R}^m$ centered at $x$, and write $D_r=D_r(0)$.
	
	For an arbitrary set $G \subset \mathbb{R}^{m+1}$, we write
	$$
	G^{+}:=G \cap \mathbb{R}_{+}^{m+1} \quad \text { and } \quad \partial^{+} G:=\partial G \cap \mathbb{R}_{+}^{m+1} .
	$$
If $G \subset \mathbb{R}_{+}^{m+1}$ is a bounded open set, we shall say that $G$ is admissible whenever
	
	$\bullet$ $\partial G$ is s Lipschitz regular;
	
	$\bullet$ the (relative) open set $\partial^0 G \subset \pa\mathbb{R}_{+}^{m+1}$ defined by
	$$
	\partial^0 G:=\left\{\mathbf{x} \subset \partial G \cap \pa\mathbb{R}_{+}^{m+1}: B_r^{+}(\mathbf{x}) \subset G \text { for some } r>0\right\},
	$$
	is non empty and has Lipschitz boundary; and then we have
	$\partial G=\partial^{+} G \cup \overline{\partial^0 G}$.
	
	$\bullet$ Finally, we identify $\mathbb{R}^m=\partial \mathbb{R}_{+}^{m+1}$; a set $A \subset \mathbb{R}^m$ is also identified with $A \times\{0\} \subset \partial \mathbb{R}_{+}^{m+1}$.
	
	\section{Classical singularity stratification}

	Since one of the main tools in this note is the Caffarelli-Silvestre extension of  \cite{Caffarelli-Silvestre-2007} (which may have originated in the probability literature \cite{MO-69}), we first introduce some spaces over  an open set $G \subseteq \mathbb{R}^{m+1}$. Following \cite{Millot-Pegon-Schikorra-2021-ARMA, Millot-Sire-15}, we define the weighted $L^2$-space
	$$
	L^2\left(G,|z|^a \mathrm{d} \mathbf{x}\right):=\left\{v \in L_{\text {loc }}^1(G):|z|^{\frac{a}{2}} v \in L^2(G)\right\}
	$$
	with $a=1-2 s$	and norm
	$$
	\|v\|_{L^2\left(G,|z|^a \mathrm{d} \mathbf{x}\right)}^2:=\int_G|z|^a|v|^2 \mathrm{d} \mathbf{x}.
	$$
	Accordingly, we introduce the weighted Sobolev space
	$$
	H^1\left(G,|z|^a \mathrm{d} \mathbf{x}\right):=\left\{v \in L^2\left(G,|z|^a \mathrm{d} \mathbf{x}\right): \nabla v \in L^2\left(G,|z|^a\mathrm{d} \mathbf{x}\right)\right\},
	$$
	normed by
	$$
	\|v\|_{H^1\left(G,|z|^a \mathrm{d} \mathbf{x}\right)}:=\|v\|_{L^2\left(G,|z|^a \mathrm{d} \mathbf{x}\right)}+\|\nabla v\|_{L^2\left(G,|z|^a \mathrm{d} \mathbf{x}\right)} .
	$$
	It follows that both $L^2\left(G,|z|^a \mathrm{d} \mathbf{x}\right)$ and $H^1\left(G,|z|^a \mathrm{d} \mathbf{x}\right)$ are separable Hilbert spaces when equipped with the scalar product induced by their respective Hilbertian norms.
	On $H^1\left(G,|z|^a \mathrm{d} \mathbf{x}\right)$, we define the weighted Dirichlet energy $\mathbf{E}_s(\cdot, G)$ by setting
	\begin{equation}\label{eq: 2.1}
		\mathbf{E}_s(v, G):=\frac{\boldsymbol{\delta}_s}{2} \int_G|z|^a|\nabla v|^2 \mathrm{d} \mathbf{x} \quad \text { with } \boldsymbol{\delta}_s:=2^{2 s-1} \frac{\Gamma(s)}{\Gamma(1-s)} \text {. }
	\end{equation}
	
	Some  properties of $H^1\left(G,|z|^a \mathrm{d} \mathbf{x}\right)$ are in order (see \cite{Millot-Pegon-Schikorra-2021-ARMA, Millot-Sire-15} for more results). For a bounded admissible open set $G \subseteq \mathbb{R}_{+}^{m+1}$, the space $L^2\left(G,|z|^a \mathrm{d} \mathbf{x}\right)$ embeds continuously into $L^\gamma(G)$ for every $1\leq\gamma<\frac{1}{1-s}$ whenever $s\in(0,1 / 2)$ by H\"{o}lder's inequality. For $s \in[1/2,1)$, we have $L^2\left(G,|z|^a \mathrm{d} \mathbf{x}\right) \hookrightarrow L^2(G)$ continuously since $a \leq 0$. In any case, it implies that
	$$
	H^1\left(G,|z|^a \mathrm{d} \mathbf{x}\right) \hookrightarrow W^{1, \gamma}(G)
	$$
	continuously for every $1<\gamma<\min \{1/(1-s), 2\}$. As a consequence, we have the compact embedding
\begin{equation}\label{eq: comp-embedding}
H^1\left(G,|z|^a \mathrm{d} \mathbf{x}\right)\hookrightarrow \hookrightarrow  L^\gamma(G), \qquad \forall\,1<\gamma<\min \{1/(1-s), 2\}.
\end{equation}	
	
	Now we define the $s$-harmonic extension of a given measurable function $u:\R^n\to \R$ to the half-space $\mathbb{R}_{+}^{m+1}$ by setting
	\begin{equation}\label{u^e}
		u^e(x, z):=\sigma_{m,s} \int_{\mathbb{R}^m} \frac{z^{2s} u(y)}{\left(|x-y|^2+z^2\right)^{\frac{m+2s}{2}}} d y,
	\end{equation}
	where $ \sigma_{m, s}:=\pi^{-\frac{m}{2}} {\Gamma\left(\frac{m+2 s}{2}\right)}/{\Gamma(s)} $ is a normalization constant.
	It follows that $u^e$ solves the equation
	\begin{equation}\label{eq: s-harmonic equation}
		\begin{cases}\operatorname{div}\left(z^a \nabla u^{\mathrm{e}}\right)=0 & \text { in } \mathbb{R}_{+}^{m+1} \\ u^e=u & \text { on } \partial \mathbb{R}_{+}^{m+1}\end{cases}
	\end{equation}
We mention  that the first equation of  \eqref{eq: s-harmonic equation} is  locally uniformly elliptic in $\R^{m+1}_+$ with smooth coefficient, and thus the unique continuation principle in Theorem 1.2 of \cite{Garofalo-Lin-86-Indiana} applies. That is, if both functions $u,v$ solves the first equation of \eqref{eq: s-harmonic equation} and $u\equiv v$ in an open subset of $\R^{m+1}_+$, then $u\equiv v$ holds in $\R^{m+1}_+$;  and consequently $u\equiv v$ holds also on the boundary $\R^m=\pa \R^{m+1}_+$.  This property will be used to study the symmetry of $u$ later.

It has been proved in \cite{Caffarelli-Silvestre-2007} that, 	for every   $u \in H^s\left(\mathbb{R}^m\right)$, there holds
		\begin{equation}\label{H^sE^s}
				\begin{aligned}
						{[u]_{H^s\left(\mathbb{R}^m\right)}^2 } & =\mathbf{E}_s\left(u^{\mathrm{e}}, \mathbb{R}_{+}^{m+1}\right) \\
						& =\inf \left\{\mathbf{E}_s\left(v, \mathbb{R}_{+}^{m+1}\right): v \in H^1\left(\mathbb{R}_{+}^{m+1},|z|^a \mathrm{d} \mathbf{x}\right), v=u \text { on } \mathbb{R}^m\right\},
					\end{aligned}
			\end{equation}
		where the function space $H^s(\R^m)$ is defined as follows: for any open subset $\Om\subset\R^m$, the function space $H^s(\Om)$ consists of all measurable functions $u\in L^2(\Om)$ which satisfies
		\[
	{[u]_{H^s(\Omega)}^2 } :=\frac{\gamma_{m, s}}{2} \iint_{\Omega \times \Omega} \frac{|u(x)-u(y)|^2}{|x-y|^{m+2 s}} \mathrm{~d} x \mathrm{~d} y<\infty. 	 \]
		If $u \in \widehat{H}^s(\Omega)$ for some open set $\Omega \subseteq \mathbb{R}^m$, the following estimates on $u^{\mathrm{e}}$ somehow extends the first equality in \eqref{H^sE^s} to the localized setting.	
	\begin{lemma}(\cite[Lemma 2.9]{Millot-Pegon-Schikorra-2021-ARMA}) Let $\Omega \subseteq \mathbb{R}^m$ be an open set. For every $u \in \widehat{H}^s(\Omega)$, the extension $u^{\mathrm{e}}$ given by \eqref{u^e} belongs to $H^1\left(G,|z|^a \mathrm{d} \mathbf{x}\right) \cap L_{\mathrm{loc}}^2\left(\overline{\mathbb{R}_{+}^{m+1}},|z|^a \mathrm{d} \mathbf{x}\right)$ for every bounded admissible open set $G \subseteq \mathbb{R}_{+}^{m+1}$ satisfying $\overline{\partial^0 G} \subseteq \Omega$. In addition, for every point $\mathbf{x}_0=\left(x_0, 0\right) \in \Omega \times\{0\}$ and $r>0$ such that $D_{3 r}\left(x_0\right) \subseteq \Omega$,
		\begin{equation} \left\|u^{\mathrm{e}}\right\|_{L^2\left(B_r^{+}\left(\mathbf{x}_0\right),|z|^a \mathrm{d} \mathbf{x}\right)}^2 \leq C\left(r^2 \mathcal{E}_s\left(u, D_{2 r}\left(x_0\right)\right)+r^{2-2 s}\|u\|_{L^2\left(D_{2 r}\left(x_0\right)\right)}^2\right),
		\end{equation}
		and
		\begin{equation}\label{eq: E_sbounded}
			\mathbf{E}_s\left(u^{\mathrm{e}}, B_r^{+}\left(\mathbf{x}_0\right)\right) \leq C_1 \mathcal{E}_s\left(u, D_{2 r}\left(x_0\right)\right),
		\end{equation}
		for a constant $C_1=C_1(m, s)$.
	\end{lemma}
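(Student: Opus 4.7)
The plan is a localization argument: split $u$ into a near piece and a far piece around $x_0$, and estimate each by a different technique. Fix a cutoff $\eta\in C_c^\infty(D_{2r}(x_0))$ with $\eta\equiv 1$ on $D_{3r/2}(x_0)$ and $|\nabla\eta|\le C/r$, let $c$ be the mean of $u$ on $D_{2r}(x_0)$, and by linearity of the Poisson formula \eqref{u^e} decompose
\[
u^{\mathrm{e}}-c=w^{\mathrm{e}}+R^{\mathrm{e}},\qquad w:=\eta(u-c),\quad R:=(1-\eta)(u-c).
\]
Here $w\in H^s(\R^m)$ is compactly supported in $D_{2r}(x_0)$, while $R$ vanishes on $D_{3r/2}(x_0)$, so the Poisson integral defining $R^{\mathrm{e}}(\mathbf{x})$ for $\mathbf{x}\in B_r^+(\mathbf{x}_0)$ only sees $y$ that are uniformly far from $x$.

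For the near piece $w^{\mathrm{e}}$, a fractional Leibniz estimate for the product $\eta(u-c)$ together with the fractional Poincar\'e inequality $\|u-c\|^2_{L^2(D_{2r}(x_0))}\le Cr^{2s}\mathcal{E}_s(u,D_{2r}(x_0))$ yields $[w]_{H^s(\R^m)}^2\le C\mathcal{E}_s(u,D_{2r}(x_0))$; the global identity \eqref{H^sE^s} then gives $\mathbf{E}_s(w^{\mathrm{e}},B_r^+(\mathbf{x}_0))\le\mathbf{E}_s(w^{\mathrm{e}},\R^{m+1}_+)=[w]_{H^s(\R^m)}^2\le C\mathcal{E}_s(u,D_{2r}(x_0))$. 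The weighted $L^2$ norm of $w^{\mathrm{e}}$ follows from Young's convolution inequality applied to $w^{\mathrm{e}}(\cdot,z)=P_s(\cdot,z)*w$ with the probability Poisson kernel $P_s(\cdot,z)$: one gets $\|w^{\mathrm{e}}(\cdot,z)\|_{L^2(\R^m)}\le\|w\|_{L^2(\R^m)}$, and integrating against $z^a$ over $(0,r)$ produces the $r^{2-2s}\|u\|^2_{L^2(D_{2r}(x_0))}$ bound via $\int_0^r z^{1-2s}\,dz=Cr^{2-2s}$.

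For the far piece $R^{\mathrm{e}}$, the distance bound $|x-y|\ge|y-x_0|/3$ valid for $(x,z)\in B_r^+(\mathbf{x}_0)$ and $|y-x_0|\ge 3r/2$ turns the explicit form of $P_s$ into the pointwise estimate
\[
|R^{\mathrm{e}}(\mathbf{x})|\le Cz^{2s}\int_{|y-x_0|\ge 3r/2}\frac{|u(y)-c|}{|y-x_0|^{m+2s}}\,dy,
\]
and a similar estimate for $|\nabla R^{\mathrm{e}}|$ with an extra factor $|y-x_0|^{-1}+z^{-1}$ from differentiating the kernel. The tail integral is reduced to quantities on $D_{2r}(x_0)$ via the inequality $|u(y)-c|^2\le 2|u(y)-u(x')|^2+2|u(x')-c|^2$ averaged over $x'\in D_{2r}(x_0)$: using $|x'-y|\asymp|y-x_0|$ together with the cross-integral part of the definition \eqref{energy E_s} of $\mathcal{E}_s(u,D_{2r}(x_0))$, one obtains
\[
\int_{|y-x_0|\ge 3r/2}\frac{|u(y)-c|^2}{|y-x_0|^{m+2s}}\,dy\le Cr^{-m}\mathcal{E}_s(u,D_{2r}(x_0))+Cr^{-m-2s}\|u-c\|^2_{L^2(D_{2r}(x_0))}.
\]
Cauchy--Schwarz on the pointwise estimate and the $z$-integration $\int_0^r z^{a+4s}\,dz=Cr^{2+2s}$ then yield the desired weighted $L^2$ bound on $R^{\mathrm{e}}$; the identical scheme applied to $\nabla R^{\mathrm{e}}$ gives the energy bound $\mathbf{E}_s(R^{\mathrm{e}},B_r^+(\mathbf{x}_0))\le C\mathcal{E}_s(u,D_{2r}(x_0))$.

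Summing the contributions of $w^{\mathrm{e}}$ and $R^{\mathrm{e}}$ and absorbing $c$ via $|c|^2|B_r^+(\mathbf{x}_0)|\le Cr^{2-2s}\|u\|^2_{L^2(D_{2r}(x_0))}$ produces both stated inequalities. The main obstacle is the tail estimate for $R^{\mathrm{e}}$: one has to use the precise definition \eqref{energy E_s} to recover the centered tail quantity $\int|u-c|^2|y-x_0|^{-m-2s}\,dy$ over the exterior region from the cross integral hidden in $\mathcal{E}_s(u,D_{2r}(x_0))$, and then carefully balance the $z$-integrals (with weights $z^a$, $z^{4s}$ and the derivatives of the Poisson kernel) against the spatial decay $|y-x_0|^{-m-2s}$ so that the final $r$-scaling matches the homogeneity in the statement of the lemma.
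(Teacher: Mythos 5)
The lemma you were asked to prove is quoted in this paper directly from Millot--Pegon--Schikorra and is not reproved here, so there is no in-paper argument to compare against; I evaluate your proposal on its own terms and against the standard proof in the source.

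Your decomposition $u-c=\eta(u-c)+(1-\eta)(u-c)$, with the Caffarelli--Silvestre identity \eqref{H^sE^s} controlling the compactly supported near piece and pointwise Poisson-kernel decay controlling the far piece, is the right strategy and essentially the one used in Millot--Sire and Millot--Pegon--Schikorra. The far-piece computation is correct as outlined: the key geometric bound $|x-y|\ge\tfrac13|y-x_0|$ for $\mathbf{x}\in B_r^+(\mathbf{x}_0)$ and $|y-x_0|\ge\tfrac{3r}{2}$, the pointwise bounds $|R^e|\lesssim z^{2s}T$ and $|\nabla R^e|\lesssim z^{2s}T_1+z^{2s-1}T$ with tail integrals $T,T_1$, the $z$-integrations, and the reduction of the tail quantity $\int_{|y-x_0|\ge 3r/2}|u-c|^2|y-x_0|^{-m-2s}\,dy$ to $\mathcal E_s(u,D_{2r})$ (using the cross term in \eqref{energy E_s} and averaging in $x'$) all fit together with the correct homogeneities. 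One remark: for $|y-x_0|$ slightly larger than $\tfrac{3r}{2}$ and $x'\in D_{2r}$, the points $y,x'$ can be arbitrarily close, so the comparison you need is the one-sided bound $|y-x'|\lesssim|y-x_0|$, hence $|y-x_0|^{-m-2s}\lesssim|y-x'|^{-m-2s}$; you invoke ``$|x'-y|\asymp|y-x_0|$'', which is false in general but the one-sided inequality that you actually use is true.

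The one place where the write-up compresses too much is the claim that ``a fractional Leibniz estimate'' plus Poincar\'e gives $[w]^2_{H^s(\mathbb R^m)}\le C\,\mathcal E_s(u,D_{2r}(x_0))$. When you expand
$w(x)-w(y)=\eta(x)(u(x)-u(y))+(\eta(x)-\eta(y))(u(y)-c)$,
the second piece with $y\notin D_{2r}$ and $x\in\operatorname{supp}\eta$ produces the same exterior tail integral you already estimated for $R$; it is not controlled by the local Poincar\'e inequality alone. So the Leibniz bound for $w$ is not logically prior to the far-piece estimate---it reuses it. That is harmless (all the tools are in your toolbox), but the proof should make explicit that the two pieces share this tail lemma rather than being independent. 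It is also worth requiring $\operatorname{supp}\eta$ compactly inside $D_{2r}$ (say $\operatorname{supp}\eta\subset D_{7r/4}$, $\eta\equiv1$ on $D_{3r/2}$) so that $|x-y|\gtrsim r$ and $|x-y|\gtrsim|y-x_0|$ hold uniformly when $x\in\operatorname{supp}\eta$ and $y\notin D_{2r}$.

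Finally, you do not address the first assertion of the lemma ($u^e\in H^1(G,|z|^a\,\mathrm d\mathbf x)\cap L^2_{\rm loc}(\overline{\mathbb R^{m+1}_+},|z|^a\,\mathrm d\mathbf x)$ for admissible $G$ with $\overline{\partial^0 G}\subseteq\Omega$). It does follow by covering $\overline{\partial^0 G}$ by small half-balls to which your two inequalities apply and using interior regularity of the degenerate-elliptic extension equation away from $\{z=0\}$, but this should be said.

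\end{document}
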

	
	Consequently, there follows	
	
	\begin{corollary}\label{cor: conti. extension operator} Let $\Omega \subseteq \mathbb{R}^m$ be an open set and $G \subseteq \mathbb{R}_{+}^{m+1}$ a bounded admissible open set such that $\overline{\partial^0 G} \subseteq \Omega$. The extension operator $u \mapsto u^{\mathrm{e}}$ defines a continuous linear operator from $\widehat{H}^{s}(\Omega)$ into $H^1\left(G,|z|^a \mathrm{d} \mathbf{x}\right)$.
	\end{corollary}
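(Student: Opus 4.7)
Linearity of the operator $u\mapsto u^{\mathrm{e}}$ is immediate from the Poisson representation \eqref{u^e}, since the kernel is independent of $u$. The task is therefore to establish the quantitative bound
\[
\|u^{\mathrm{e}}\|_{H^{1}(G,|z|^{a}\mathrm{d}\mathbf{x})}\le C(m,s,\Omega,G)\,\|u\|_{\widehat{H}^{s}(\Omega)},
\]
and the plan is a finite covering argument combining the boundary estimates of the preceding lemma with interior regularity for the divergence-form equation \eqref{eq: s-harmonic equation}.

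First, since $\overline{\partial^{0}G}\subset\Omega$ is compact, I would fix $r_{0}>0$ small enough that $D_{3r_{0}}(x)\subset\Omega$ for every $x\in\overline{\partial^{0}G}$, and select finitely many points $x_{1},\ldots,x_{N}\in\overline{\partial^{0}G}$ so that the disks $D_{r_{0}/2}(x_{i})$ cover $\overline{\partial^{0}G}$. Writing $\mathbf{x}_{i}=(x_{i},0)$, the associated half-balls $B_{r_{0}/2}^{+}(\mathbf{x}_{i})$ then cover an open neighbourhood $V\subset\overline{\mathbb{R}_{+}^{m+1}}$ of $\overline{\partial^{0}G}$. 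Applying the two inequalities of the preceding lemma on each $B_{r_{0}}^{+}(\mathbf{x}_{i})$ and summing over $i$ would yield
\[
\|u^{\mathrm{e}}\|_{H^{1}(V\cap G,|z|^{a}\mathrm{d}\mathbf{x})}^{2}\le C\,\|u\|_{\widehat{H}^{s}(\Omega)}^{2}.
\]

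Next, the remainder $K:=\overline{G\setminus V}$ is a compact subset of the open half-space $\mathbb{R}_{+}^{m+1}$, so $z\ge z_{0}>0$ on $K$ for some $z_{0}$, while the boundedness of $G$ ensures $z$ is also bounded above on $K$. On a slightly larger neighbourhood of $K$ inside $\mathbb{R}_{+}^{m+1}$ the weight $z^{a}$ is smooth and uniformly bounded away from $0$ and $\infty$, so the equation $\mathrm{div}(z^{a}\nabla u^{\mathrm{e}})=0$ is uniformly elliptic with smooth coefficients there. A standard Caccioppoli estimate, combined with a finite chain of overlapping Euclidean balls linking $K$ to the boundary half-balls $B_{r_{0}/2}^{+}(\mathbf{x}_{i})$, would then give
\[
\|u^{\mathrm{e}}\|_{H^{1}(K,|z|^{a}\mathrm{d}\mathbf{x})}^{2}\le C\,\|u^{\mathrm{e}}\|_{L^{2}(V\cap G,|z|^{a}\mathrm{d}\mathbf{x})}^{2}\le C\,\|u\|_{\widehat{H}^{s}(\Omega)}^{2}.
\]
Adding the two contributions yields the desired continuity.

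The only genuinely delicate point I foresee is the chain-of-balls argument for the interior part. This is however routine, since $G$ is bounded and $V$ is a fixed neighbourhood of $\partial^{0}G$: the length of any chain connecting an interior point to $V$ is controlled by $\diam(G)/z_{0}$, so the resulting constant $C$ depends only on $m,s,\Omega,G$, as required.
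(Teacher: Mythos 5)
Your Step 1 (covering a neighbourhood of $\overline{\partial^0 G}$ by half-balls on which the lemma's two estimates apply, and observing that $\mathcal{E}_s(u,D_{2r_0}(x_i))\le\mathcal{E}_s(u,\Omega)$ because $D_{2r_0}(x_i)\subset\Omega$) is sound and gives the boundary contribution.

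Step 2, however, has a genuine gap. A Caccioppoli inequality bounds $\|\nabla u^{\mathrm{e}}\|_{L^2}$ on a ball by $\|u^{\mathrm{e}}\|_{L^2}$ on a \emph{larger} ball; it does not propagate $L^2$ control from one ball to a neighbouring one. Likewise, the chain-of-balls (Harnack) argument you invoke only transports bounds for \emph{nonnegative} solutions, whereas $u^{\mathrm{e}}$ is vector-valued and sign-changing. A sign-changing solution of a uniformly elliptic equation can be small in $L^2$ near one part of the domain and large in the interior (think of high-degree harmonic polynomials restricted to an annulus), so the inequality $\|u^{\mathrm{e}}\|_{H^1(K,|z|^a\mathrm{d}\mathbf{x})}^2\le C\,\|u^{\mathrm{e}}\|_{L^2(V\cap G,|z|^a\mathrm{d}\mathbf{x})}^2$ simply does not follow from ellipticity of the PDE on $K$. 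What saves the estimate here is the explicit Poisson representation \eqref{u^e}, which the proposal never uses in the interior: for $(x,z)\in K$ with $z_0\le z\le z_1$ the kernel $\sigma_{m,s}\,z^{2s}(|x-y|^2+z^2)^{-(m+2s)/2}$ is bounded, smooth, and decays like $|y|^{-(m+2s)}$, so one can bound $|u^{\mathrm{e}}(x,z)|$ (and $|\nabla u^{\mathrm{e}}(x,z)|$, by differentiating under the integral) directly by $\|u\|_{L^2(\Omega)}$ plus the weighted $L^2$-tail of $u$ on $\Omega^c$, and the latter is controlled by $\mathcal{E}_s(u,\Omega)$ through the cross term $\Omega\times\Omega^c$ in the definition \eqref{energy E_s}. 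This gives the interior $L^\infty$ (hence $L^2$) bound, after which Caccioppoli may be used for the gradient. Alternatively, since the lemma already asserts the qualitative membership $u^{\mathrm{e}}\in H^1(G,|z|^a\mathrm{d}\mathbf{x})$ and both $\widehat{H}^s(\Omega)$ and $H^1(G,|z|^a\mathrm{d}\mathbf{x})$ are Hilbert spaces, the boundedness can also be obtained from the closed graph theorem, using the half-ball estimates together with unique continuation \cite{Garofalo-Lin-86-Indiana} to identify the limit; either repair closes the gap.
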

	
	The next theorem concerns monotonicity formula of $s$-harmonic mappings, which  plays the most important role in the regularity theory of  $s$-harmonic mappings.
	\begin{theorem}(\cite[Proposition 2.17]{Millot-Pegon-Schikorra-2021-ARMA})\label{monotonicity formula} Let $\Omega \subseteq \mathbb{R}^m$ be a bounded open set. If $u \in \widehat{H}^s\left(\Omega ; \mathbb{R}^d\right)$ is stationary in $\Omega$, then for every $\mathbf{x}_0=\left(x_0, 0\right) \in \Omega \times\{0\}$, the normalized energy function
		$$
		r \in\left(0, \operatorname{dist}\left(x_0, \Omega^c\right)\right) \mapsto \boldsymbol{\Theta}_s\left(u^{\mathrm{e}}, \mathbf{x}_0, r\right):=\frac{1}{r^{m-2 s}} \mathbf{E}_s\left(u^{\mathrm{e}}, B_r^{+}\left(\mathbf{x}_0\right)\right)
		$$
		is nondecreasing. Moreover,
		\begin{equation}\label{monotonicity formular 1}
			\boldsymbol{\Theta}_s\left(u^{\mathrm{e}}, \mathbf{x}_0, r\right)-\boldsymbol{\Theta}_s\left(u^{\mathrm{e}}, \mathbf{x}_0, \rho\right)=\boldsymbol{\delta}_s \int_{B_r^{+}\left(\mathbf{x}_0\right) \backslash B_\rho^{+}\left(\mathbf{x}_0\right)} z^a \frac{\left|\left(\mathbf{x}-\mathbf{x}_0\right) \cdot \nabla u^{\mathrm{e}}\right|^2}{\left|\mathbf{x}-\mathbf{x}_0\right|^{m+2-2 s}} \mathrm{~d} \mathbf{x}
		\end{equation}
		for every $0<\rho<r<\operatorname{dist}\left(x_0, \Omega^c\right)$, where $\boldsymbol{\delta}_s$ is the constant defined  in \eqref{eq: 2.1}.
	\end{theorem}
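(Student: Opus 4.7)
The plan is to show that $r \mapsto \boldsymbol{\Theta}_s(u^{\mathrm{e}}, \mathbf{x}_0, r)$ has a nonnegative radial derivative whose integral against $dr$ reproduces the volume integral in \eqref{monotonicity formular 1}. Three ingredients combine: (i) a coarea-type identity for $\frac{d}{dr} \mathbf{E}_s(u^{\mathrm{e}}, B_r^+(\mathbf{x}_0))$ on the spherical face $\partial^+ B_r^+(\mathbf{x}_0)$; (ii) a Pohozaev-type identity obtained by testing the extension equation $\divergence(z^a \na u^{\mathrm{e}})=0$ against the radial multiplier $X \cdot \na u^{\mathrm{e}}$ with $X(\mathbf{x}) := \mathbf{x} - \mathbf{x}_0$; and (iii) the stationarity hypothesis \eqref{stationary s-harmonic map} on $u$, which is what makes the contribution from the flat face $\partial^0 B_r^+(\mathbf{x}_0) \subset \R^m$ vanish.

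For (i), a Fubini/coarea argument (legitimate for a.e.\ $r$ since $u^{\mathrm{e}} \in H^1_{\loc}(\overline{\R^{m+1}_+}, |z|^a d\mathbf{x})$) yields
\[ \boldsymbol{\Theta}_s'(r) = -\frac{m-2s}{r}\, \boldsymbol{\Theta}_s(r) + \frac{\boldsymbol{\delta}_s}{2 r^{m-2s}}\int_{\partial^+ B_r^+(\mathbf{x}_0)} z^a |\na u^{\mathrm{e}}|^2 \, dS. \]
For (ii), integration by parts in $\int_{B_r^+} \divergence(z^a \na u^{\mathrm{e}})(X\cdot \na u^{\mathrm{e}})\, d\mathbf{x}=0$, using the key divergence identity $\partial_i(z^a X_i) = (m+2-2s)z^a$ (since $X_{m+1}=z$) and the fact that $X \cdot \nu = 0$ on the flat face $\partial^0 B_r^+(\mathbf{x}_0)$, produces
\[ \frac{r}{2}\int_{\partial^+ B_r^+(\mathbf{x}_0)} z^a\!\left(|\na u^{\mathrm{e}}|^2 - 2(\pa_r u^{\mathrm{e}})^2\right) dS - \frac{m-2s}{2}\int_{B_r^+(\mathbf{x}_0)} z^a |\na u^{\mathrm{e}}|^2\, d\mathbf{x} \;=\; -\mathcal{I}_0(r), \]
where the flat-face defect is
\[ \mathcal{I}_0(r) := \int_{\partial^0 B_r^+(\mathbf{x}_0)} \lim_{z\to 0^+}\!\bigl(z^a \pa_z u^{\mathrm{e}}\bigr)(x)\cdot (x-x_0)\cdot \na_x u(x)\, dx, \]
the weighted conormal limit being (up to normalisation) the fractional Laplacian of $u$ on $\R^m$.

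The main obstacle will be step (iii): showing $\mathcal{I}_0(r)\equiv 0$ for stationary $s$-harmonic maps. The strategy is to substitute into \eqref{stationary s-harmonic map} the localised inner vector field $\psi_\varepsilon(x) = \chi_\varepsilon(x)(x-x_0)$, with $\chi_\varepsilon$ a smooth cutoff approximating $\mathbf{1}_{D_r(x_0)}$. Rewriting $\tfrac{d}{dt}|_{t=0}\mathcal{E}_s(u(\,\cdot\,+t\psi_\varepsilon))=0$ in terms of $u^{\mathrm{e}}$ via the energy identity \eqref{H^sE^s} and the localisation estimate \eqref{eq: E_sbounded} produces an inner-variation identity for $u^{\mathrm{e}}$ on $\R^{m+1}_+$ whose interior part exactly matches the bulk side of the Pohozaev identity of (ii), and whose boundary trace is precisely $\mathcal{I}_0$; letting $\varepsilon \to 0$ isolates $\mathcal{I}_0(r)=0$ for a.e.\ $r$. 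The delicate point is controlling the weighted trace $\lim_{z\to 0^+} z^a\pa_z u^{\mathrm{e}}$, which is only a distribution on $\R^m$; this is justified by the Muckenhoupt-$A_2$ trace theory for the extension equation together with the compatibility results collected in \cite{Millot-Pegon-Schikorra-2021-ARMA}.

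Inserting $\mathcal{I}_0(r)=0$ from (iii) into the Pohozaev identity of (ii) and solving for the surface integral of $z^a|\na u^{\mathrm{e}}|^2$ lets (i) be rewritten as
\[ \boldsymbol{\Theta}_s'(r) \;=\; \frac{\boldsymbol{\delta}_s}{r^{m-2s}} \int_{\partial^+ B_r^+(\mathbf{x}_0)} z^a (\pa_r u^{\mathrm{e}})^2\, dS \;\geq\; 0, \]
which is nonnegative, so $\boldsymbol{\Theta}_s$ is nondecreasing. Switching to polar coordinates $\mathbf{x}=\mathbf{x}_0+r'\omega$ centered at $\mathbf{x}_0$ rewrites $(\pa_{r'} u^{\mathrm{e}})^2 = |X\cdot \na u^{\mathrm{e}}|^2/|X|^2$ and $dS/{r'}^{m-2s} = d\omega\, {r'}^{2s}$, matching the integrand $z^a |X\cdot \na u^{\mathrm{e}}|^2/|X|^{m+2-2s}$; integrating from $\rho$ to $r$ converts the identity above into \eqref{monotonicity formular 1}, completing the proof.
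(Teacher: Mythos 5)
The paper does not prove this theorem itself; it cites \cite[Proposition~2.17]{Millot-Pegon-Schikorra-2021-ARMA}, so the comparison is against the approach of that reference. Your overall architecture --- coarea differentiation of $\boldsymbol{\Theta}_s$, a Pohozaev identity from testing $\divergence(z^a\nabla u^{\mathrm e})=0$ against the radial multiplier, the divergence computation $\partial_i(z^aX_i)=(m+2-2s)z^a$, and the final polar-coordinate conversion to \eqref{monotonicity formular 1} --- is correct and matches the standard route taken in \cite{Millot-Pegon-Schikorra-2021-ARMA,Millot-Sire-15}. Steps (i), (ii) and (iv) check out.

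The gap is concentrated in step (iii), which is the only place the stationarity hypothesis enters and is the technical heart of the proposition. Two problems. First, you frame the target as the boundary identity $\mathcal I_0(r)=0$, where $\mathcal I_0$ pairs the weighted conormal trace $\lim_{z\to0^+}z^a\partial_z u^{\mathrm e}$ pointwise against $(x-x_0)\cdot\nabla_x u$. For $u$ merely in $\widehat H^s$, both factors are only distributions of negative order on $\mathbb R^m$ and this pointwise integral is not a priori meaningful; invoking ``Muckenhoupt-$A_2$ trace theory'' names the difficulty but does not resolve it. The actual argument in \cite{Millot-Pegon-Schikorra-2021-ARMA} sidesteps this entirely: it derives an interior inner-variation identity for $u^{\mathrm e}$ over $\mathbb R^{m+1}_+$ in which only bulk integrals of $z^a|\nabla u^{\mathrm e}|^2$ appear against $\divergence\mathbf X$, $D\mathbf X$, and $X_{m+1}/z$; no flat-face boundary term is ever written down, and the trace pairing never needs to be defined. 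Second, the passage from \eqref{stationary s-harmonic map} to a statement about $u^{\mathrm e}$ requires combining \emph{two} pieces of information: the vanishing inner variation of $\mathcal E_s(u,\Omega)$ on $\mathbb R^m$, \emph{and} the fact that $u^{\mathrm e}$ minimises $\mathbf E_s(\cdot,\mathbb R^{m+1}_+)$ among all competitors with trace $u$. One compares $\mathbf E_s\bigl((u\circ(\mathrm{id}+t\psi))^{\mathrm e}\bigr)$ with $\mathbf E_s\bigl(u^{\mathrm e}\circ\Phi_t^{-1}\bigr)$ for a bulk vector field $\Phi_t$ extending $\psi$, uses minimality to get a one-sided inequality, and symmetrises in $\pm t$. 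Your sketch cites \eqref{H^sE^s} --- which encodes this minimality --- but does not carry out the comparison, so the mechanism by which stationarity of the trace produces the bulk stationarity identity for $u^{\mathrm e}$ is left unexplained. Since that comparison argument is essentially the entire content of the cited proposition, the proposal as written has a genuine gap there rather than merely a deferred computation.
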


	%
	
	Based on the monotonicity formula above,  the following partial Lipschitz regularity theorem was established by \cite[Theorem 5.1]{Millot-Pegon-Schikorra-2021-ARMA}.
	
	\begin{theorem}[Partial Regularity] \label{thm: partial regularity of MPS}
		There exist $\varepsilon_{1}=\varepsilon_{1}(m,s)>0$ and $\kappa_{2}=\kappa_{2}(m,s)\in(0,1)$
		such that the following holds. Let $u\in\widehat{H}^{s}\left(D_{2R};\mathbb{S}^{d-1}\right)$
		be a weakly $s$-harmonic map in $D_{2R}$ such that the function
		$r\in(0,2R-|\mathbf{x}|)\mapsto\boldsymbol{\Theta}_{s}\left(u^{\mathrm{e}},
		\mathbf{x},r\right)$
		is nondecreasing for every $\mathbf{x}\in\partial^{0}B_{2R}^{+}$.
		If
		\[ \boldsymbol{\Theta}_{s}\left(u^{\mathrm{e}},\boldsymbol{0},R\right)
		\leq\varepsilon_{1},
		\]
		then $u\in C^{0,1}\left(D_{\kappa_{2}R}\right)$ and
		\[
		R^{2}\|\nabla u\|_{L^{\infty}\left(D_{\kappa_{2}R}\right)}^{2}\leq C_{2}\boldsymbol{\Theta}_{s}\left(u^{\mathrm{e}},\boldsymbol{0},R\right)
		\]
		for a constant $C_{2}=C_{2}(m,s)$.\end{theorem}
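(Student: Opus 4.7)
The plan is to prove this $\varepsilon$-regularity theorem through the standard sequence: scaling reduction, propagation of smallness via monotonicity, a harmonic-replacement energy-decay lemma, Campanato iteration to H\"older continuity, and a bootstrap to Lipschitz. By the scale invariance $u \mapsto u_R(x) := u(Rx)$, under which $\boldsymbol{\Theta}_s(u^e, \mathbf{0}, R) = \boldsymbol{\Theta}_s(u_R^e, \mathbf{0}, 1)$, I first reduce to $R = 1$. The monotonicity of $r \mapsto \boldsymbol{\Theta}_s(u^e, \mathbf{x}, r)$ at each boundary point, combined with the inclusion $B_{1/2}^+(\mathbf{x}) \subset B_1^+(\mathbf{0})$ for $\mathbf{x} \in \partial^0 B_{1/2}^+$, propagates the hypothesis to
\[\boldsymbol{\Theta}_s(u^e, \mathbf{x}, r) \leq 2^{m-2s}\,\varepsilon_1 \qquad \forall\, \mathbf{x} \in \partial^0 B_{1/2}^+,\ 0 < r < 1/2.\]

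The heart of the argument is a one-step decay lemma: there exist $\theta \in (0, 1/2)$, $\gamma \in (0,1)$ and $\varepsilon_0 > 0$ such that whenever $\boldsymbol{\Theta}_s(u^e, \mathbf{x}, r) \leq \varepsilon_0$, one has $\boldsymbol{\Theta}_s(u^e, \mathbf{x}, \theta r) \leq \theta^{2\gamma}\, \boldsymbol{\Theta}_s(u^e, \mathbf{x}, r)$. To prove it I would compare $u^e$ on $B_r^+(\mathbf{x})$ with the weighted harmonic replacement $v \in H^1(B_r^+(\mathbf{x}),|z|^a\D\mathbf{x})$ solving $\operatorname{div}(z^a \nabla v) = 0$, $v = u^e$ on $\partial^+ B_r^+(\mathbf{x})$, and the homogeneous conormal condition on $\partial^0 B_r^+(\mathbf{x})$. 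Interior H\"older estimates for the degenerate operator $\operatorname{div}(z^a \nabla \cdot)$ deliver the energy decay $\mathbf{E}_s(v, B_{\theta r}^+(\mathbf{x})) \leq C \theta^{m-2s+2\alpha}\,\mathbf{E}_s(v, B_r^+(\mathbf{x}))$ for some $\alpha \in (0,1)$. At the same time, the sphere constraint $|u|=1$, the energetic minimality of $v$ at prescribed trace, and the weak Euler--Lagrange equation for $u$ (whose right-hand side is quadratic in $\nabla u$ and pointwise normal to $\mathbb{S}^{d-1}$) together produce the quadratic comparison $\mathbf{E}_s(u^e - v, B_r^+(\mathbf{x})) \leq C\,\varepsilon_0\, \mathbf{E}_s(u^e, B_r^+(\mathbf{x}))$; combining the two yields the decay lemma once $\varepsilon_0$ is small enough.

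Iterating the decay lemma at every $\mathbf{x} \in \partial^0 B_{1/2}^+$ produces the Campanato-type estimate $\boldsymbol{\Theta}_s(u^e, \mathbf{x}, r) \leq C r^{2\gamma}\varepsilon_1$ for all small $r$, which by a weighted Campanato--Morrey argument yields $u \in C^{0,\gamma}(D_{1/2})$. With $u$ continuous the sphere constraint becomes a qualitatively harmless nonlinearity, and a standard Schauder-type bootstrap for the extension equation---applied to tangential difference quotients of $u^e$ and using interior estimates for $\operatorname{div}(z^a\nabla\cdot)$---upgrades $u$ to $C^{0,1}(D_{\kappa_2})$. Re-scaling the resulting sup-norm bound back gives
\[R^2 \|\nabla u\|_{L^\infty(D_{\kappa_2 R})}^2 \leq C_2\, \boldsymbol{\Theta}_s(u^e, \mathbf{0}, R).\]
The main obstacle I anticipate is the quantitative $H^1$-control of $u^e - v$: because the sphere constraint is quadratic rather than linear, the comparison must genuinely exploit the conormal form of the Euler--Lagrange equation together with the pointwise bound $|u|=1$, and it is precisely at this step that the threshold $\varepsilon_1$ is truly consumed.
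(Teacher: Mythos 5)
This statement is not proved in the paper; it is quoted verbatim from Theorem~5.1 of Millot--Pegon--Schikorra \cite{Millot-Pegon-Schikorra-2021-ARMA}, so the comparison is with the argument in that reference.

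Your scaling reduction, the propagation $\boldsymbol{\Theta}_s(u^e,\mathbf{x},r)\leq 2^{m-2s}\varepsilon_1$ by monotonicity, the Campanato iteration, and the tangential difference-quotient bootstrap to Lipschitz are all sound and match the standard architecture. The gap is exactly where you flagged it, and it is not a loose end but a missing idea. The theorem is stated for \emph{weakly} $s$-harmonic maps (only the monotonicity of $\boldsymbol{\Theta}_s$ is assumed in addition), not for minimizers, and the quadratic comparison $\mathbf{E}_s(u^e-v,B_r^+)\leq C\varepsilon_0\,\mathbf{E}_s(u^e,B_r^+)$ cannot be extracted from the weak Euler--Lagrange equation alone. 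Testing $\operatorname{div}(z^a\nabla(u^e-v))=0$ against $u^e-v$ and inserting the conormal condition $\boldsymbol{\delta}_s\,\partial^a_\nu u^e = \gamma_{m,s}\,\text{(quadratic in }u)\cdot u$ on $\partial^0 B_r^+$ only gives $\mathbf{E}_s(u^e-v,B_r^+)\lesssim \|u-v\|_{L^\infty(\partial^0 B_r^+)}\,\mathcal{E}_s(u,D_r)$, and the $L^\infty$ difference is not controlled by small energy without an a priori oscillation bound — which is what you are trying to prove. This is precisely the classical gap between Schoen--Uhlenbeck (minimizers, where harmonic replacement plus minimality closes the comparison) and Evans/Bethuel (stationary sphere-valued maps, where a genuinely new compensation mechanism is required). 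In the fractional setting, Millot--Pegon--Schikorra close it by exploiting the antisymmetric structure of the sphere-valued equation: the right-hand side of the Euler--Lagrange equation is pointwise normal to $\mathbb{S}^{d-1}$, so $u\wedge(-\Delta)^s u$ vanishes, and in the extension picture the quantity $u^i\partial^a_\nu (u^e)^j - u^j\partial^a_\nu (u^e)^i$ has a div-curl/Wente-type cancellation that feeds into a nonlocal commutator estimate in the spirit of Da Lio--Rivi\`ere's three-commutator estimates. That compensation argument produces the decay of a Morrey-type seminorm of $u$ (not a comparison against a weighted harmonic replacement) and yields Hölder continuity; your bootstrap from Hölder to Lipschitz then goes through as you describe. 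If you restrict to minimizing $s$-harmonic maps your plan can be completed along Schoen--Uhlenbeck lines, but for the theorem as stated the replacement-comparison step must be abandoned in favour of the compensated-compactness/commutator estimate.
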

	
	In terms of the regularity scale function (see definition \eqref{def: regularity scale function}), we have
	
	\begin{proposition} \label{prop: reg-scale} Suppose $u\in\widehat{H}^{s}(D_{4},\S^{d-1})$ is
		a stationary $s$-harmonic map. There exist $\varepsilon_{2}=\varepsilon_{2}(m,s)>0$
		and $\kappa_{2}=\kappa_{2}(m,s)\in(0,1)$ such that if ${\cal E}_{s}(u,D_{4})<\varepsilon_{2}$,
		then
		\[
		r_{u}(0)\ge\kappa_{2}.
		\]
	\end{proposition}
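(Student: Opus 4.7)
The proposition is essentially a translation of the $\varepsilon$-regularity result in Theorem~\ref{thm: partial regularity of MPS} into the language of the regularity scale function $r_u$. My plan is to apply that theorem at scale $R=1$ centered at the origin, use the monotonicity formula to verify its nondecreasing hypothesis, and use the extension estimate \eqref{eq: E_sbounded} to convert the smallness of $\mathcal{E}_s(u,D_4)$ into smallness of $\boldsymbol{\Theta}_s(u^{\mathrm{e}},\mathbf{0},1)$.

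First, I would verify the hypotheses of Theorem~\ref{thm: partial regularity of MPS} for the choice $R=1$. Since $u\in\widehat{H}^s(D_4,\mathbb{S}^{d-1})$ is stationary in $D_4$, the monotonicity formula of Theorem~\ref{monotonicity formula} guarantees that the map $r\mapsto \boldsymbol{\Theta}_s(u^{\mathrm{e}},\mathbf{x},r)$ is nondecreasing on $(0,4-|x|)$ for every $\mathbf{x}=(x,0)\in\partial^0 B_2^+$ (so $x\in D_2$). Since $(0,2-|x|)\subset (0,4-|x|)$, the monotonicity hypothesis of Theorem~\ref{thm: partial regularity of MPS} holds automatically.

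Next, I would bound the normalized energy at the origin at scale $1$. Applying the estimate \eqref{eq: E_sbounded} with $\mathbf{x}_0=\mathbf{0}$ and $r=1$, which is legitimate since $D_3\subset D_4$, gives
\[
\boldsymbol{\Theta}_s(u^{\mathrm{e}},\mathbf{0},1)=\mathbf{E}_s(u^{\mathrm{e}},B_1^+)\le C_1\,\mathcal{E}_s(u,D_2)\le C_1\,\mathcal{E}_s(u,D_4).
\]
Hence imposing $\mathcal{E}_s(u,D_4)<\varepsilon_1/C_1$ yields $\boldsymbol{\Theta}_s(u^{\mathrm{e}},\mathbf{0},1)\le\varepsilon_1$, so Theorem~\ref{thm: partial regularity of MPS} applies and produces $u\in C^{0,1}(D_{\kappa_2})$ with
\[
\|\nabla u\|_{L^\infty(D_{\kappa_2})}^{2}\le C_2\,\boldsymbol{\Theta}_s(u^{\mathrm{e}},\mathbf{0},1)\le C_1 C_2\,\mathcal{E}_s(u,D_4),
\]
where $\kappa_2=\kappa_2(m,s)\in(0,1)$ is the constant supplied by Theorem~\ref{thm: partial regularity of MPS}; I would reuse it as the constant appearing in the proposition.

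Finally, by the definition \eqref{def: regularity scale function} of the regularity scale, $r_u(0)\ge\kappa_2$ is equivalent to the estimate $\kappa_2\|\nabla u\|_{L^\infty(D_{\kappa_2})}\le 1$. Combining with the gradient bound above, it suffices to take
\[
\varepsilon_2:=\min\left\{\frac{\varepsilon_1}{C_1},\ \frac{1}{C_1 C_2\kappa_2^{2}}\right\},
\]
a constant depending only on $m$ and $s$. Indeed, if $\mathcal{E}_s(u,D_4)<\varepsilon_2$, then $\kappa_2\|\nabla u\|_{L^\infty(D_{\kappa_2})}\le\kappa_2\sqrt{C_1C_2\varepsilon_2}\le 1$, as desired. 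The argument poses no serious obstacle: all the deep content lies in Theorem~\ref{thm: partial regularity of MPS}, and the monotonicity formula makes its hypothesis automatic for stationary $s$-harmonic maps; the only mildly delicate point is selecting the scale $R=1$ so that the energy estimate \eqref{eq: E_sbounded} transports the control from $D_4$ to $B_1^+$.
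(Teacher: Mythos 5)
Your proof is correct and mirrors the paper's own argument: both verify the monotonicity hypothesis of Theorem~\ref{thm: partial regularity of MPS} via Theorem~\ref{monotonicity formula}, transport the smallness of $\mathcal{E}_s(u,D_4)$ to the extended half-ball via \eqref{eq: E_sbounded}, and then read off $r_u(0)\ge\kappa_2$ from the resulting $L^\infty$ gradient bound, with $\varepsilon_2$ chosen as an explicit minimum involving $\varepsilon_1$, $C_1$, $C_2$. The only deviation is that you work at scale $R=1$ while the paper works at $R=2$; your choice is in fact the cleaner one, since \eqref{eq: E_sbounded} at $r=1$ needs only $D_3\subset D_4$, whereas $r=2$ would literally require $D_6\subset D_4$.
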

	\begin{proof}
		Choose $\varepsilon_{2}\le\min\{\varepsilon_{1}/C_{1},1/(C_{1}C_{2})\}$
		such that ${\cal E}_{s}(u,D_{4})<\varepsilon_{2}$. Then \eqref{eq: E_sbounded} implies
		\[
		\boldsymbol{\Theta}_{s}\left(v,\mathbf{x}_{0},2\right)\le C_{1}\varepsilon_{2}\le\varepsilon_{1}.
		\]
		Then Theorem \ref{thm: partial regularity of MPS} yields
		\[
		\kappa_{2}^{2}\|\nabla u\|_{L^{\infty}\left(D_{2\kappa_{2}}\right)}^{2}\leqq C_{2}\boldsymbol{\Theta}_{s}\left(u^{\mathrm{e}},\boldsymbol{0},2\right)
		\kappa_{2}^{2}\le C_{2}C_{1}\varepsilon_{2}\le1.
		\]
		This yields the result.
	\end{proof}
	
	Another important consequence of monotonicity formula is
	the compactness results of  \cite[Theorems 7.1, 7.2, 7.3]{Millot-Pegon-Schikorra-2021-ARMA}.
	
	\begin{theorem}\label{thm: compactness} (1) Assume that $s\in (0,1)\backslash\{1/2\}$ and $m>2s$. Let $\{u_{i}\}_{i\ge1}\subset\widehat{H}_{\Lambda}^{s}(D_{4},\S^{d-1})$
		be a sequence of uniformly bounded stationary $s$-harmonic map and
		$u_{i}\wto u$ in $\widehat{H}^{s}(D_{4},\R^n)$. Then $u$ is a stationary
		$s$-harmonic map in $D_{4}$, and for any open subset $\om\subset D_{4}$
		and every bounded admissible open set $G\subset\R_{+}^{m+1}$ satisfying
		$\bar{\om}\subset D_{4}$ and $\overline{\pa^{0}G}\subset D_{4}$,
		there hold
		\[
		\begin{aligned} & u_{i}\to u\qquad\text{strongly in }\widehat{H}^{s}(\om,\R^{d}).\\
			& u_{i}^{e}\to u^{e}\qquad\text{strongly in }H^{1}(G;\R^{d},|z|^{a}d\mathbf{x}).
		\end{aligned}
		\]
		
		(2) In the case $0<s\le 1/2$, the same compactness result also holds in case $\{u_{i}\}$ is a sequence of minimizing $s$-harmonic	maps.	Moreover, in this case the limit $u$ is also a minimizing $s$-harmonic map.
	\end{theorem}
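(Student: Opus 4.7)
The plan is to lift the problem to the upper half-space via the Caffarelli--Silvestre extension and combine the monotonicity formula with the small-energy partial regularity of Theorem \ref{thm: partial regularity of MPS} to rule out energy concentration. Since the extension satisfies the local degenerate-elliptic equation $\operatorname{div}(z^a\nabla u^{\mathrm{e}})=0$ in $\mathbb{R}^{m+1}_+$, any loss of compactness must occur at the boundary $\{z=0\}$.

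\textbf{Weak convergence and passage to the limit in the equations.} The uniform bound $\mathcal{E}_s(u_i,D_4)<\Lambda$ and the continuity of the extension operator (Corollary \ref{cor: conti. extension operator}) give $u_i^{\mathrm{e}}\rightharpoonup u^{\mathrm{e}}$ in $H^1(G,|z|^a d\mathbf{x})$ for every admissible $G$ with $\overline{\partial^0 G}\subset D_4$. The compact embedding \eqref{eq: comp-embedding} then yields, along a subsequence, $u_i^{\mathrm{e}}\to u^{\mathrm{e}}$ strongly in $L^\gamma_{\loc}$ and pointwise a.e., so the sphere constraint $u\in\mathbb{S}^{d-1}$ passes to the limit. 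Interior elliptic regularity for $\operatorname{div}(z^a\nabla\cdot)=0$ upgrades this further to $u_i^{\mathrm{e}}\to u^{\mathrm{e}}$ in $C^\infty_{\loc}(\mathbb{R}^{m+1}_+)$. These convergences are enough to pass to the limit in the weak sphere-valued Euler--Lagrange equation, via the antisymmetric reformulation $u\wedge(-\Delta)^s u=0$ that turns the quadratic Lagrange-multiplier term into a bilinear expression amenable to a fractional div-curl argument, and in the bilinear stationary condition \eqref{stationary s-harmonic map}; thus $u$ is itself a stationary $s$-harmonic map.

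\textbf{Strong convergence, stationary case.} Up to a subsequence, $|z|^a|\nabla u_i^{\mathrm{e}}|^2 d\mathbf{x}\rightharpoonup |z|^a|\nabla u^{\mathrm{e}}|^2 d\mathbf{x}+\mu$ for a nonnegative Radon defect measure $\mu$ supported in $\overline{G}\cap\{z=0\}$, since interior smoothness rules out any contribution from $\{z>0\}$. Passing to the limit in the monotonicity formula \eqref{monotonicity formular 1} shows that $r\mapsto \boldsymbol{\Theta}_s(u^{\mathrm{e}},\mathbf{x}_0,r)+r^{-(m-2s)}\mu(\overline{B_r^+(\mathbf{x}_0)})$ is nondecreasing, so the density $\Theta_\mu(\mathbf{x}_0):=\lim_{r\to 0^+} r^{-(m-2s)}\mu(\overline{B_r^+(\mathbf{x}_0)})$ exists and is upper semicontinuous. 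At any $\mathbf{x}_0$ with $\Theta_\mu(\mathbf{x}_0)<\varepsilon_1/(2C_1)$ one chooses $r$ small and $i$ large enough so that $\boldsymbol{\Theta}_s(u_i^{\mathrm{e}},\mathbf{x}_0,2r)<\varepsilon_1/C_1$; Theorem \ref{thm: partial regularity of MPS} then delivers uniform $C^{0,1}$ bounds for the $u_i$ on $D_{\kappa_2 r}(x_0)$, giving smooth convergence and $\mu\equiv 0$ there. The sphere-valued antisymmetric structure, combined with the monotonicity-based density bound for $\mu$ itself, excludes the remaining potential concentration points, yielding $\mu\equiv 0$ everywhere and hence the desired strong convergence in $H^1(G,|z|^a d\mathbf{x})$ and in $\widehat{H}^s(\omega,\mathbb{R}^d)$.

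\textbf{Minimizing case and main obstacle.} For $0<s\le 1/2$, the strong convergence and the minimality of the limit admit a cleaner comparison proof: given any $w\in\widehat{H}^s(\omega,\mathbb{S}^{d-1})$ with $w=u$ off a compact set $K\subset\omega$, one builds competitors $w_i$ that equal $u_i$ outside a slight enlargement of $K$ and equal the nearest-point projection onto $\mathbb{S}^{d-1}$ of $w+(u_i-u)$ inside, interpolating in a thin annulus. The strong $L^2$ convergence together with the smallness of the annulus yields $\limsup_i \mathcal{E}_s(w_i,\omega)\le \mathcal{E}_s(w,\omega)$, so the minimality of $u_i$ combined with weak lower semicontinuity gives both $\mathcal{E}_s(u_i,\omega)\to \mathcal{E}_s(u,\omega)$ (hence strong convergence) and the minimality of $u$. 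The main obstacle will be the stationary case: the monotonicity formula by itself only controls the size of $\{\Theta_\mu>0\}$, and the vanishing of $\mu$ relies crucially on the sphere-target structure and on Theorem \ref{thm: partial regularity of MPS}; by contrast, the minimizing case reduces to constructing a good energy-comparable competitor and is comparatively routine.
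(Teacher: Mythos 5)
The paper does not actually prove this theorem: it is imported verbatim as a citation from Millot--Pegon--Schikorra (their Theorems 7.1, 7.2, 7.3), and the sole text surrounding the statement is the sentence ``Another important consequence of the monotonicity formula is the compactness results of \cite[Theorems 7.1, 7.2, 7.3]{Millot-Pegon-Schikorra-2021-ARMA}.'' So there is no ``paper's own proof'' to compare against; I can only assess your sketch against the actual argument in the cited reference and against the standard concentration--compactness template.

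Your high-level structure --- lift via Caffarelli--Silvestre, pass to a subsequence with a nonnegative defect measure $\mu$ supported on $\{z=0\}$, use the monotonicity formula to get a well-defined density $\Theta_\mu$, and invoke the $\varepsilon$-regularity of Theorem \ref{thm: partial regularity of MPS} near any point where $\Theta_\mu$ is small --- is the right skeleton, and the minimizing case via competitor construction (a Luckhaus-type argument, where you interpolate $u_i$ to $w$ across a thin annulus using the strong $L^2$ convergence) is also the correct route for part (2). But there is a genuine gap at the crux of part (1): the final step, ``the sphere-valued antisymmetric structure, combined with the monotonicity-based density bound for $\mu$ itself, excludes the remaining potential concentration points,'' is not an argument. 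You have shown $\mu$ vanishes on the open set $\{\Theta_\mu < \varepsilon_1/(2C_1)\}$, but you have not shown the closed concentration set $\Sigma = \{\Theta_\mu \ge \varepsilon_1/(2C_1)\}$ carries no mass. A priori $\mu$ could have a continuous part concentrated on $\Sigma$, and the density lower bound only controls the size of $\Sigma$ (a Vitali covering plus monotonicity gives $\mathcal{H}^{m-2s}(\Sigma) < \infty$), not that $\mu\restriction_\Sigma=0$. This is the substantive obstacle the reference has to overcome, and it genuinely depends on the sphere target and on $s\neq 1/2$ (you do not differentiate $s>1/2$, where $m-2s<m-1$, from $s<1/2$, where $m-2s>m-1$ and the concentration set can be large; the mechanism is not identical). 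Invoking the ``antisymmetric structure'' in a single clause, without writing the compensation/commutator estimate that it actually buys you, leaves the proof incomplete exactly where it is hardest. Similarly, ``these convergences are enough to pass to the limit in the weak sphere-valued Euler--Lagrange equation via ... a fractional div-curl argument'' names a tool but does not carry out the estimate; since the nonlinearity is quadratic and the convergence is only weak in $\widehat H^s$, this is precisely the place where compensation must be exhibited, not merely cited. To turn your sketch into a proof you would need to (i) state and prove the div-curl/three-commutator estimate for the sphere-valued structure that lets you pass to the limit in \eqref{weak s-harmonic map}, and (ii) give the argument (Federer--Ziemer capacity, or a blow-up of $\mu$ at a $\Theta_\mu$-density point together with equipartition of energy for the extension) that forces $\mu(\Sigma)=0$.
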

	This compactness result implies (see \cite[Section 7.2]{Millot-Pegon-Schikorra-2021-ARMA} for details) that if  $u \in \widehat{H}^s\left(\Omega ; \mathbb{S}^{d-1}\right)$ is a stationary $s$-harmonic map for $s\neq 1/2$ or a minimizing $1/2$-harmonic map, then for every $x\in \Om$  and every sequence $r_k\to 0$, there exists a subsequence  $r^{\prime}_k\to 0$ and a map $\var:\R^m\to \S^{d-1}$ which is 0-homogeneous at the origin (see Definition \ref{def: classical symmetry} below) such that
	\[
	\begin{aligned} & u_{x,r_k^\prime}\to \var\qquad\text{strongly in }\widehat{H}^{s}(D_r), \text{  and  }\\
		& u_{x,r_k^\prime}^{e}\to \var^{e}\qquad\text{strongly in }H^{1}(B_r^+;\mathbb{R}^d,|z|^{a}d\mathbf{x})
	\end{aligned}
	\]
	for all $r>0$, where $u_{x,r}(y)= u(x+ry)$.
	
	\begin{definition} Let  $u \in \widehat{H}^s\left(\Omega ; \mathbb{R}^d\right)$ be a stationary $s$-harmonic map for $s\neq 1/2$, or a minimizing $1/2$-harmonic	maps for $s=1/2$. The above deduced map $\var$ is called a tangent map of $u$ at $x\in \Om$. \end{definition}	
	
	Now we recall the definition of $k$-symmetry (see, e.g. Cheeger and Naber \cite{Cheeger-Naber-2013-CPAM}).
	\begin{definition}[symmetry]\label{def: classical symmetry} Given a measurable map $\var: \mathbb{R}^m \rightarrow \mathbb{R}$. We say that
		
		(1)  $\var$ is  $0$-homogeneous or  $0$-symmetric with respect to point $p \in \mathbb{R}^m$ if $\var(p+\lambda v)=$ $\var(p+v)$ for all $\lambda>0$ and $v \in \mathbb{R}^m$.
		
		(2)  $\var$ is $k$-symmetric if $\var$ is $0$-symmetric with respect to the origin, and $\var$ is translation invariant with respect to a $k$-dimensional subspace $V \subset \mathbb{R}^m$, i.e.,
		$$
		\var(x+v)=\var(x) \quad \text { for all } x \in \mathbb{R}^m, v \in V.
		$$
	\end{definition}
	
	Then, for any $k\in\{0,1,\cdots,m\}$, we can  define for $s$-harmonic map the set
	\begin{equation}\label{eq: classical stratification}
		\Sigma^{k}(u)=\{x\in\Om:\text{no tangent map of }u\text{ is }(k+1)\text{-symmetric at }x\}.
	\end{equation}
	It is direct to verify that
	\[\Sigma^{0}(u) \subset \Sigma^{1}(u)\subset \cdots\subset \Sigma^{m-1}(u)\subset \Sigma^{m}(u)=\Om. \]
	Furthermore,  $x\not\in \Sigma^{m-1}(u)$ means that $u$ has a constant tangent map at $x$. This leads to the following  simple observation. Let $s\in (0,1)\backslash \{1/2\}$ and $u \in \widehat{H}^s\left(\Omega ; \mathbb{S}^{d-1}\right)$ be a stationary $s$-harmonic map; or $s=1/2$ and $u \in \widehat{H}^{1/2}\left(\Omega ; \mathbb{S}^{d-1}\right)$ be a minimizing $1/2$-harmonic map. Then we have
	$$\sing(u)= \Sigma^{m-1}(u).$$
	To see this, first suppose that $x\in \Om\backslash \Sigma^{m-1}(u)$;  that is, $u$ has a constant tangent map at $x$. Then  the compactness theorem \ref{thm: compactness} implies that $\boldsymbol{\Theta}_s(u^e, x, r)\to 0$ as $r\to 0$,  which in turn implies by the $\ep$-regularity theorem that $u$ is smooth in a neighborhood of $x$. Hence $\sing(u)\subset \Sigma^{m-1}(u)$.  On the other hand, if $u$ is smooth in a neighborhood of $x$, then surely there is a unique constant tangent map at $x$. This implies that $ \Sigma^{m-1}(u)\subset\sing(u)$.
	Therefore, in this case we deduce
	\[\Sigma^{0}(u) \subset \Sigma^{1}(u)\subset \cdots\subset \Sigma^{m-1}(u)=\sing(u). \]
	This is the so-called classical stratification of $\sing(u)$. In the next section we will use the approach of Cheeger and Naber \cite{Cheeger-Naber-2013-CPAM} to study each $\Sigma^{k}(u)$.

	\section{Quantitative stratification and volume estimates}\label{sec: Quantitative stratification and volume estimates}
	In  spirit of the idea in Cheeger and Naber \cite{Cheeger-Naber-2013-CPAM}, and also in order to combine the Caffarelli-Silvestre extension of a given mappings with the symmetry together, we define
	
	\begin{definition}[Quantitative symmetry] Fix a constant $1<\ga_0<\min\{1/(1-s), 2\}$.  Given a map $u \in \widehat{H}^{s}(\Omega, \R^d), \ep>0$ and nonnegative integer $k$, we say that $u$ is $(k, \ep)$-symmetric on $D_r(x) \subset \subset \Omega$, if there exists a  $k$-symmetric function $h\in  \widehat{H}^{s}(D_{2r}, \R^d)$ such that
		$$	
		\medint_{B_1^{+}}\left|u^e_{x,r}(\mathbf{y})
		-h^e_{0,r}(\mathbf{y}) \right|^{\gamma_0} d \mathbf{y}   =	\medint_{B_r^{+}(\mathbf{x})}\left|u^e(\mathbf{y})
		-h^e(\mathbf{y}-\mathbf{x})\right|^{\gamma_0} d \mathbf{y} \leq \ep
		$$
		where $\mathbf{x}=(x, 0)$.\end{definition}
	
	By Corollary \ref{cor: conti. extension operator} and the compact embedding \eqref{eq: comp-embedding}, the above integral is well defined.  A basic fact concerning the above  notion is the following  weak compactness of quantitatively symmetric functions.
	\begin{remark}\label{rmk: symmetry compactness}
		Suppose $\{u_i\}\subset \widehat{H}^{s}(\Omega)$ converges weakly to a mapping $v$  in $\widehat{H}^{s}(\Omega)$, $D_{2r}(x)\subset \Omega$ and  $u_i$ is $(k,\ep_i)$-symmetric on $D_r(x)$ for some $\ep_i\to 0$. Then $v$ is $k$-symmetric on $D_r(x)$.
	\end{remark}
To see this, we can assume without loss of generality that $x=0$ and $r=1$. Using the definition of quantitative symmetry, there exist a sequence of $k$-symmetric functions $h_i$ such that $$\medint_{B_1^{+}}\left|(u_i)^e(\mathbf{y})
-(h_i)^e(\mathbf{y}) \right|^{\gamma_0} d \mathbf{y}\le \ep_i\to 0.$$
 Since $u_i\wto v$ in $\widehat{H}^{s}(\Omega)$, we can assume up to a subsequence that $u_i^e\to v^e$ weakly in $H^1(B_1^+, z^ad\mathbf{x})$ and strongly in $L^{\ga_0}(B_1^+)$. 
Then $h_i^e\to v^e$ strongly in $L^{\ga_0}(B^+_1)$. 

On the other hand, since $h_i$ is $k$-symmetric, it is 0-homogeneous and translation invariant with respect to  a $k$-dimensional subspace $V_i\subset \R^m$, and so is $(h_i)^e$. We claim that this implies  $v^e$ is $k$-symmetric in $D_1$. To see this, first we note that $v^e$ is 0-homogeneous with respect to the origin in $B^+_1$ since so is each $h_i^e$. Hence, it follows from the unique continuation principle of \cite{Garofalo-Lin-86-Indiana}
 that $v^e$ is  0-homogeneous with respect to the origin in the whole upper half space $\R^{m+1}_+$. This in turn implies that $v$ is 0-homogeneous in $\R^m$. Secondly, note that by the $k$-symmetry of $h_i^e$, we infer that $v^e$ is translation invariant locally in $B^+_1$ in the sense that
 \begin{equation}\label{eq: symmetry of limit}
 v^e(\mathbf{x}+t)=v^e(\mathbf{x}),\qquad \forall\,\mathbf{x}\in B^{+}_{1/2} \text{ and } t\in V\times\{0\} \text{ with }|t|<1/10,
 \end{equation}
where $V\subset\R^m$ is a $k$-dimensional subspace. However, since $v^e(\cdot +t)$ satisfies the same equation as that of $v^e$, the unique continuation principle of \cite{Garofalo-Lin-86-Indiana} implies that \eqref{eq: symmetry of limit} holds for all $\mathbf{x}\in \R^{m+1}_+$. This further implies that $v^e$ is translation invariant with respect to $V$. Therefore, we can conclude that $v$ is $k$-symmetric. 

	Given the definition of quantitative symmetry, we can introduce a quantitative stratification for points of a function according to how much it is symmetric around those points.
	
	\begin{definition}[Quantitative stratification]\label{def: qs}
		For any map $u \in \widehat{H}^{s}(\Omega, N)$, $r,\eta>0$ and $k\in\{0,1,\cdots,m\}$, we define the $k$-th quantitative singular stratum $\mathcal{S}_{\eta,r}^k(u)\subset \Omega$ as
		$$\mathcal{S}^k_{\eta,r}(u)\equiv\Big\{x\in\Omega: u \text{ is not } (k+1,\eta)\text{-symmetric on }D_s(x) \text{ for any } r\le s\le 1\Big\}.$$
		Furthermore, we set
		\[
		\mathcal{S}^k_\eta(u):=\bigcap_{r>0}\mathcal{S}^k_{\eta,r}(u)
		\quad\text{ and }\quad
		\mathcal{S}^k(u)=\bigcup_{\eta>0}\mathcal{S}^k_\eta(u).
		\]
	\end{definition}
	
	It is then straightforward to verify by definition that,
	$$
	\text{If }\,\,k'\leq k,\,\eta'\geq\eta,\,r'\leq r, \qquad \text{then}\quad \mathcal{S}^{k'}_{\eta',r'}(u)\subseteq \mathcal{S}^k_{\eta,r}(u).
	$$
	The following remark shows that this definition of quantitative stratification is indeed a quantitative version of the classically defined one.
	
	\begin{remark} If $u\in\widehat{H}^{s}(\Om,N)$ is a stationary $s$-harmonic
		map for $s\neq 1/2$, or a minimizing $s$-harmonic
		map for $s=1/2$, then
		$$S^{k}(u)=\Si^{k}(u),\qquad \forall\,0\le k\le m,$$
where  $\Si^{k}(u)$ is defined as in \eqref{eq: classical stratification}.\end{remark}
	\begin{proof}
		Suppose $x\not\in S^{k}(u)$. Then, for each $i\ge1$, there exists
		$r_{i}>0$ such that $u$ is $(k+1,1/i)$-symmetric on $D_{r_{i}}(x)$.
		That is, there exist a $(k+1)$-symmetric function $h_{i}\in\widehat{H}^s(D_{2r_i})$ such that
		\[
		\medint_{B_{1}^{+}}|u_{x,r_{i}}^{e}-(h_{i})_{r_i}^e|^{\gamma_0}<1/i.
		\]
		If $r_{i}\to0$, then we obtain a tangent map $v$ of $u$ at $x$
		which is $(k+1)$-symmetric on $D_{1}$ from the above inequality (see Remark \ref{rmk: symmetry compactness}).
		If $r_{i}\ge\de>0$ for all $i\gg1$ for some $\de$, then the above
		inequality implies that $u_{x,\de}$ is $(k+1)$-symmetric on $D_{1}$,
		which still has the same consequence as the previous case. Hence $x\not\in\Si^{k}(u)$.
		Therefore, $\Si^{k}(u)\subset S^{k}(u)$.
		
		On the other hand, suppose $x\not\in\Si^{k}(u)$. Then there exist
		$r_{i}\to0$ such that $u_{x,r_{i}}^{e}\to v^{e}$ in $L^{2}(B_{1}^{+})$
		for some $(k+1)$-symmetric tangent map $v$. But this certainly implies
		that $x\not\in S_{\eta}^{k}(u)=\cap_{r>0}S_{\eta,r}^{k}(u)$ for any
		$\eta>0$. Hence $x\not\in S^{k}(u)$. Thus $S^{k}(u)\subset\Si^{k}(u)$.
		The proof is complete.
	\end{proof}
	
	The following two lemmata give a criterion on the quantitative symmetry of a given 	mapping.
	\begin{lemma}[Quantitative Rigidity]\label{quantitative rigidity}
		Let $u \in \widehat{H}_{\Lambda}^s\left(D_4,N\right)$ be a stationary $s$-harmonic map. Then for every $\ep>0$ and $0<\gamma<1 / 2$ there exist $\delta=\delta(\gamma, \ep,s, m, N, \Lambda)>0$ and $q=q(\gamma, \ep,s, m, N, \Lambda) \in \mathbb{N}$ such that for $r \in(0,1 / 2)$, if
		$$
		\boldsymbol{\Theta}_s(u^e,\boldsymbol{0},2 r)-\boldsymbol{\Theta}_s\left(u^e,\boldsymbol{0}, \gamma^q r\right) \leq \delta,
		$$
		then $u$ is $(0, \ep)$-symmetric on $D_{2r}$.	\end{lemma}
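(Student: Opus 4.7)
The plan is to argue by contradiction, combining the rescaling invariance of $\boldsymbol{\Theta}_s$, the strong convergence of the Caffarelli--Silvestre extensions provided by Theorem~\ref{thm: compactness}, and the equality case of the monotonicity formula \eqref{monotonicity formular 1}. Fix $\ep>0$ and $\gamma\in(0,1/2)$, and suppose no admissible pair $(\delta,q)$ exists. Then there are sequences $\delta_i\to 0^+$, $q_i\to\infty$, scales $r_i\in(0,1/2)$, and stationary $s$-harmonic maps $u_i\in\widehat{H}_{\Lambda}^s(D_4,N)$ satisfying
$$
\boldsymbol{\Theta}_s\bigl(u_i^e,\boldsymbol{0},2r_i\bigr)-\boldsymbol{\Theta}_s\bigl(u_i^e,\boldsymbol{0},\gamma^{q_i} r_i\bigr)\leq\delta_i,
$$
yet each $u_i$ fails to be $(0,\ep)$-symmetric on $D_{2r_i}$. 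Rescale by $v_i(y):=u_i(r_iy)$; each $v_i$ is a stationary $s$-harmonic map on a domain containing $D_8$, and a direct change of variables gives the scale invariance $\boldsymbol{\Theta}_s(v_i^e,\boldsymbol{0},\rho)=\boldsymbol{\Theta}_s(u_i^e,\boldsymbol{0},r_i\rho)$. Combining this with the monotonicity formula and the bound $\mathbf{E}_s(u_i^e,B_2^+)\leq C_1\Lambda$ from \eqref{eq: E_sbounded}, together with the pointwise bound coming from compactness of $N$, gives uniform control of $v_i^e$ in $H^1_\loc(\overline{\R_+^{m+1}},|z|^a\,\mathrm{d}\mathbf{x})$ and of $v_i$ in $\widehat{H}^s$ on compact subsets of $\R^m$.

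Next, pass to a subsequence via Theorem~\ref{thm: compactness}, so that $v_i\to v_\infty$ strongly in $\widehat{H}^s$ on compacts and $v_i^e\to v_\infty^e$ strongly in $H^1(G,|z|^a\,\mathrm{d}\mathbf{x})$ on every bounded admissible $G\subset\R_+^{m+1}$, with $v_\infty$ still a stationary $s$-harmonic map into $N$. Strong convergence of the weighted gradients lets us pass to the limit in $\boldsymbol{\Theta}_s(\cdot,\boldsymbol{0},\rho)$ for each fixed $\rho$. Since $q_i\to\infty$, for every $\rho\in(0,2)$ we eventually have $\gamma^{q_i}<\rho$, so by monotonicity
$$
0\leq\boldsymbol{\Theta}_s(v_i^e,\boldsymbol{0},2)-\boldsymbol{\Theta}_s(v_i^e,\boldsymbol{0},\rho)\leq\delta_i\to 0.
$$
Hence $\rho\mapsto\boldsymbol{\Theta}_s(v_\infty^e,\boldsymbol{0},\rho)$ is constant on $(0,2)$, and plugging this constancy back into \eqref{monotonicity formular 1} forces $\mathbf{x}\cdot\nabla v_\infty^e=0$ almost everywhere in $B_2^+$, i.e., $v_\infty^e$ is $0$-homogeneous in $B_2^+$. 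Because $v_\infty^e$ and each dilate $\mathbf{x}\mapsto v_\infty^e(\lambda\mathbf{x})$ solve the same degenerate elliptic equation in $\R^{m+1}_+$ and coincide on an open set near the origin, the Garofalo--Lin unique continuation principle (invoked after \eqref{eq: s-harmonic equation}) propagates this identity to all of $\R_+^{m+1}$, so $v_\infty^e$ is globally $0$-homogeneous and its trace $v_\infty$ is a $0$-symmetric function on $\R^m$.

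To close the contradiction, take $h:=v_\infty$ as a competitor in the definition of $(0,\ep)$-symmetry of $v_i$ on $D_2$; by a change of variables, $(0,\ep)$-symmetry of $v_i$ on $D_2$ is equivalent to $(0,\ep)$-symmetry of $u_i$ on $D_{2r_i}$. The compact embedding \eqref{eq: comp-embedding} upgrades strong $H^1$ convergence of the extensions to strong $L^{\gamma_0}$ convergence on $B_2^+$, giving
$$
\medint_{B_2^+}\bigl|v_i^e(\mathbf{y})-v_\infty^e(\mathbf{y})\bigr|^{\gamma_0}\,\mathrm{d}\mathbf{y}\longrightarrow 0,
$$
so this average is at most $\ep$ for all large $i$, contradicting the assumed non-symmetry. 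The main obstacle to monitor is the propagation step from local $0$-homogeneity in $B_2^+$ to global $0$-homogeneity in $\R_+^{m+1}$, where the unique continuation principle for the degenerate equation $\operatorname{div}(z^a\nabla\cdot)=0$ plays its crucial role; a secondary technicality is verifying the hypotheses of Theorem~\ref{thm: compactness} in the various regimes of $s$ so that the compactness can indeed be applied to the rescaled sequence $\{v_i\}$.
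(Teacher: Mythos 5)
Your contradiction framework matches the paper's: rescale to unit scale, extract a limit, show the limit's extension is radially independent via the monotonicity formula, propagate this by the Garofalo--Lin unique continuation principle, and contradict non-symmetry via strong $L^{\gamma_0}$ convergence of the extensions. Making the rescaling $v_i(y)=u_i(r_iy)$ explicit is reasonable, since the paper treats it with a bare ``again we assume $r=1$''.

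The genuine gap is your reliance on Theorem \ref{thm: compactness} to get \emph{strong} $H^1$ convergence of the extensions, which you need in order to pass to the limit in $\boldsymbol{\Theta}_s$ and conclude that $\rho\mapsto\boldsymbol{\Theta}_s(v_\infty^e,\mathbf{0},\rho)$ is constant on $(0,2)$. Theorem \ref{thm: compactness} is stated only for $\S^{d-1}$-valued maps, whereas Lemma \ref{quantitative rigidity} allows a general compact target $N$; and its stationary clause excludes $s=1/2$, whereas the lemma covers every $s\in(0,1)$. You flag this as a ``secondary technicality'', but it removes part of the lemma's stated range. The paper sidesteps strong compactness entirely: from the uniform bound $\boldsymbol{\Theta}_s(v_i^e,\mathbf{0},2)\le C_1\Lambda$ (plus compactness of $N$ for the $L^2$ bound) it extracts only a \emph{weak} $H^1$ limit and applies weak lower semicontinuity directly to the right-hand side of \eqref{monotonicity formular 1}: for each fixed $\rho>0$, once $\gamma^{q_i}<\rho$,
$$
\int_{B_2^+\setminus B_\rho^+} z^a\frac{|\mathbf{x}\cdot\nabla v_i^e|^2}{|\mathbf{x}|^{m+2-2s}}\,\mathrm{d}\mathbf{x}
=\boldsymbol{\delta}_s^{-1}\bigl(\boldsymbol{\Theta}_s(v_i^e,\mathbf{0},2)-\boldsymbol{\Theta}_s(v_i^e,\mathbf{0},\rho)\bigr)\longrightarrow 0,
$$
and since the weight is bounded above and below on the annulus, lower semicontinuity forces $\mathbf{x}\cdot\nabla v_\infty^e=0$ a.e.\ there; letting $\rho\to 0$ gives $0$-homogeneity in $B_2^+$ without ever identifying the limit as a stationary $s$-harmonic map. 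Replacing your appeal to Theorem \ref{thm: compactness} by this weak-LSC step repairs the proof and restores full generality. A minor additional point: your assertion that the rescaled maps are uniformly controlled ``in $\widehat{H}^s$ on compact subsets'' is not obvious, since $\mathcal{E}_s(v_i,D_4)=r_i^{2s-m}\mathcal{E}_s(u_i,D_{4r_i})$ need not stay bounded as $r_i\to0$; the scale-invariant bounded quantity is $\boldsymbol{\Theta}_s$, which controls $\mathbf{E}_s(v_i^e,B_R^+)$ but not the global Gagliardo energy of $v_i$. This would be needed to invoke Theorem \ref{thm: compactness}, but for the weak-LSC route only the extension bound matters.
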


	\begin{proof} Assume there exist $\ep>0$ and $0<\gamma<1/2$ for which the statement is false. Again we assume that $r=1$. Then there exist a sequence of stationary $s$-harmonic maps  $u_i \in \widehat{H}_{\Lambda}^s\left(D_4,N\right)$ ($i=1,2,\ldots$) satisfying
		$$
		\boldsymbol{\Theta}_s(u_i^e,\boldsymbol{0},2 )-\boldsymbol{\Theta}_s\left(u_i^e,\boldsymbol{0}, \gamma^i \right) \leq \frac{1}{i}
		$$
		but none of $u_i$ is  $(0,\ep)$-symmetric on $D_2$. Up to a subsequence, we may assume that $u_i\rightharpoonup u$ in $\widehat{H}^s(D_4,N)$ and $u_i^e \rightharpoonup u^e$ in $H^1\left(B_2^+, |z|^ad\mathbf{x}\right)$. Then the Monotonicity formula \eqref{monotonicity formular 1} implies that
		$$
		\int_{B_{2}^+}z^a|\mathbf{x}\cdot \nabla u^e|^2d\mathbf{x}\leq C\liminf_{i\rightarrow \infty} \int_{B_2^+}z^a\frac{|\mathbf{x}\cdot \nabla u_i^e|^2}{|\mathbf{x}|^{m+2-2s}}d\mathbf{x}=0.
		$$			
		Thus, the $s$-harmonic function $u^e$ is $0$-homogeneous in $B_{2}^+$ with respect to the origin. Using the unique continuation theorem 1.2 of \cite{Garofalo-Lin-86-Indiana} we infer that $u^e$ is $0$-homogeneous in the whole upper space $\R^{m+1}_+$, which implies $u$ is 0-symmetric on $\R^m$. But then,  the strong convergence of $u_i^e\to u^e$ in $L^{\ga_0}(B_2^+)$ implies that
		$	\medint_{B_2^+}\left|u_i^e-u^e\right|^{\ga_0}<\ep	$
		for $i$ sufficiently large.
		Hence, $u_i$ is $(0,\ep)$-symmetric on $D_2$, which gives a contradiction.
	\end{proof}
	In the above proof we need to assume $u$ is a stationary $s$-harmonic mappings so as to use the monotonicity formula  \eqref{monotonicity formular 1}. The following lemma gives a quantitative geometric description on $k$-symmetry for all mappings in $\widehat{H}_{\Lambda}^s\left(D_4,N\right)$.
	
	\begin{lemma}[Quantitative cone splitting]\label{quantitative cone splitting}
		 Given constants $\eta, \tau,\Lambda>0$, there exists $\ep=\ep(s,m, N, \Lambda$, $\eta, \tau)>0$ such that the following holds. Let $u \in \widehat{H}_{\Lambda}^s\left(D_4,N\right)$, $x \in D_1$ and $0<r<1$. If
 $x \in \mathcal{S}_{\eta, r}^k(u)$ and $u$ is $(0, \ep)$-symmetric on $D_{2r}(x)$,		then there exists a $k$-dimensional affine subspace $V \subset  \mathbb{R}^{m}$ such that
		$$
		\left\{y \in D_r(x): u \text { is }(0,\ep)\text{-symmetric on } D_{2r}(y)\right\} \subset T_{\tau r}(V) \text {. }
		$$	
	\end{lemma}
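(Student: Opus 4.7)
The plan is to argue by contradiction in the blow-up and compactness scheme of \cite{Cheeger-Naber-2013-CPAM}, with the monotonicity formula \eqref{monotonicity formular 1} providing the required energy control and the unique continuation principle of \cite{Garofalo-Lin-86-Indiana} turning local information into global rigidity. Assume the statement fails: there exist sequences $u_i \in \widehat{H}_\Lambda^s(D_4, N)$, $x_i \in D_1$, and $r_i \in (0,1)$ with $x_i \in \mathcal{S}_{\eta, r_i}^k(u_i)$ and $u_i$ being $(0, 1/i)$-symmetric on $D_{2r_i}(x_i)$, yet such that for no $k$-dimensional affine subspace $V \subset \R^m$ is the set
\[
Y_i := \{y \in D_{r_i}(x_i) : u_i \text{ is } (0, 1/i)\text{-symmetric on } D_{2r_i}(y)\}
\]
contained in $T_{\tau r_i}(V)$. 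The first step is a purely geometric inductive selection: setting $y_i^0 := x_i$ and, having selected $y_i^0, \ldots, y_i^j \in Y_i$ with $j \le k$, extending their affine span (of dimension at most $j$) to some $k$-dimensional affine subspace $V_j$, the failure of the conclusion provides $y_i^{j+1} \in Y_i \setminus T_{\tau r_i}(V_j)$. After $k+1$ iterations one obtains a $\tau$-affinely independent chain $y_i^0, \ldots, y_i^{k+1} \in Y_i$.

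Next, rescale: define $\tilde u_i(y) := u_i(x_i + r_i y)$ and $\tilde y_i^j := r_i^{-1}(y_i^j - x_i) \in \overline{D_1}$, so that $\tilde y_i^0 = 0$, each $\tilde u_i$ is $(0, 1/i)$-symmetric on $D_2(\tilde y_i^j)$, and the membership $0 \in \mathcal{S}_{\eta,1}^k(\tilde u_i)$ says that $\tilde u_i$ is not $(k+1,\eta)$-symmetric on $D_1$. Scale invariance of the normalized energy $\boldsymbol{\Theta}_s$, together with the monotonicity formula and \eqref{eq: E_sbounded}, produces a uniform bound $\boldsymbol{\Theta}_s(\tilde u_i^e, \mathbf{0}, R) \le C(m, s, \Lambda)$ for every fixed $R>0$, and hence uniform control of $\mathbf{E}_s(\tilde u_i^e, B_R^+)$. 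Passing to a subsequence, $\tilde u_i \rightharpoonup u$ in $\widehat{H}^s(D_R, N)$ and $\tilde u_i^e \rightharpoonup u^e$ in $H^1(B_R^+, |z|^a d\mathbf{x})$, with strong $L^{\gamma_0}(B_R^+)$ convergence via \eqref{eq: comp-embedding}, while $\tilde y_i^j \to y^j$ with the family $\{0, y^1, \ldots, y^{k+1}\}$ affinely independent (the $\tau$-independence is preserved in the limit).

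It remains to identify $u$. By Remark \ref{rmk: symmetry compactness}, $u$ is $0$-symmetric with respect to each $y^j$. A short computation shows that if $u^e$ is $0$-homogeneous around two distinct points $p$ and $q$, then for suitable positive scalars the composition of the dilations centered at $p$ and at $q$ equals translation by a nonzero multiple of $q-p$; this yields translation invariance of $u^e$ along $\R(q-p)$ locally, and the unique continuation principle applied to \eqref{eq: s-harmonic equation} propagates this invariance to all of $\R^{m+1}_+$. Iterating over the pairs $(y^0, y^j) = (0, y^j)$, $j = 1, \ldots, k+1$, one finds that $u^e$ is translation invariant along the $(k+1)$-dimensional linear subspace $\operatorname{span}(y^1, \ldots, y^{k+1})$, and combined with $0$-symmetry at $y^0 = 0$ this makes $u$ a $(k+1)$-symmetric map in the sense of Definition \ref{def: classical symmetry}. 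Strong $L^{\gamma_0}(B_1^+)$ convergence, with $u$ itself as the $(k+1)$-symmetric comparison map, then forces $\tilde u_i$ to be $(k+1, \eta)$-symmetric on $D_1$ for all large $i$, contradicting $0 \in \mathcal{S}_{\eta, 1}^k(\tilde u_i)$.

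The main obstacle is exactly this final identification step: the quantitative hypothesis only compares $\tilde u_i$ with a $0$-symmetric model on one fixed-size ball around each $\tilde y_i^j$, whereas the conclusion demands global $(k+1)$-symmetry of the limit $u$. Bridging this local-to-global gap requires both the composition-of-scalings trick, which converts two pointwise $0$-homogeneities into a local translation invariance, and unique continuation for the degenerate elliptic operator $\operatorname{div}(z^a\nabla\cdot)$ to globalize that invariance. A secondary technical point is that the uniform Dirichlet energy estimate must survive arbitrarily small rescalings $r_i \to 0$; this is handled by the scale invariance of $\boldsymbol{\Theta}_s$ together with the monotonicity formula, which together yield a bound on every fixed $B_R^+$ needed for the compactness arguments.
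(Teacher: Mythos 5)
Your proposal follows the same contradiction-plus-compactness scheme as the paper's proof, but spells out several steps the paper compresses. You make explicit (i) the inductive construction of the $\tau$-affinely-independent chain $y_i^0, \ldots, y_i^{k+1}$, which the paper simply asserts exists, and (ii) the identification of the limit: the paper writes ``it is now straightforward to verify that $u$ is $(k+1)$-symmetric,'' whereas you supply the actual argument, namely that $0$-homogeneity of $u^e$ about two distinct points forces local translation invariance along their difference (composition of dilations), which unique continuation for $\operatorname{div}(z^a\nabla\cdot)$ then globalizes. Both of these are refinements, not departures, and they are correct.

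The one genuine deviation is your use of the monotonicity formula (Theorem \ref{monotonicity formula}) to bound $\boldsymbol{\Theta}_s(\tilde u_i^e, \mathbf{0}, R)$ uniformly along arbitrary rescalings $r_i \to 0$. The paper instead normalizes $x=0$, $r=1$ ``without loss of generality'' at the outset and runs compactness at a single scale with the raw energy bound $\mathcal{E}_s(u_i, D_4)\le\Lambda$, leaving the reduction from general $(x,r)$ implicit. Your monotonicity step is arguably the right way to make that reduction honest, because the quantity $\mathcal{E}_s$ alone is not scale-invariant when $m>2s$; however, Theorem \ref{monotonicity formula} requires $u$ to be stationary, while Lemma \ref{quantitative cone splitting} is stated for arbitrary $u\in\widehat{H}_\Lambda^s(D_4,N)$. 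You should flag this mismatch: either restrict the lemma to stationary maps (which suffices for its sole application in Lemma \ref{covering lemma}) or supply a rescaling-stable energy bound by some other means. Apart from this hypothesis issue, the argument is sound and matches the paper's.
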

	\begin{proof}	Assume without loss of generality that $x=0$ and $r=1$. We use a contradiction argument. Thus, for  given $\eta, \tau>0$, there exist a sequence $\left\{u_i\right\}$ with ${\cal E}_{s}(u_i,D_4)\le\La$ such that $0 \in \mathcal{S}_{\eta, 1}^k\left(u_i\right)$ for all $i, u_i$ is $(0,1/i)$-symmetric on $D_2$ and there exist points $\left\{x_1^i, x_2^i, \ldots, x_{k+1}^i\right\} \subset D_1$ satisfying the following two conditions:
		
		$\bullet$ $u_i$ is $(0,1/i)$-symmetric on each $D_2(x_j^i)$ for $j=1, \ldots, k+1$. That is,
		\[\int_{B^+_{2}(\mathbf{x}_j^i)} |u^e_i-h^e_{ij}|^{\gamma_0} d\mathbf{x}\le 1/i\] for some $0$-symmetric function $h_{ij}$.
		
		$\bullet$   dist$\left(x_j^i, \operatorname{span}\left\{0, x_1^i, \ldots, x_{j-1}^i\right\}\right) \geq \tau$ for all $j=1, \ldots, k+1$.
		
		After passing to a subsequence, there exists a map $u$ such that $u_i^e \to u^e$ weakly in $H^1(B_3^+,|z|^ad\mathbf{x})$ and strongly in $L^{\gamma_0}\left(B_3^+\right)$; and there exist points $\left\{x_1, \ldots, x_{k+1}\right\} \subset \overline{D}_1$ such that $u$ is $0$-symmetric on  $D_2(x_j)$ for all $ j=0,1, \ldots, k+1$. Here we write $x_0=0$. The distance relations are also preserved:  we have  dist$\left(x_j, \operatorname{span}\left\{x_0, x_1, \ldots, x_{j-1}\right\}\right) \geq \tau$ for all $j=0, \ldots, k+1$.
		
		It is now straightforward to verify that $u$ is $(k+1)$-symmetric on $D_1$. But then,  the strong convergence  $u_i^e \to u^e$ in $L^{\ga_0}(B_1^+)$  gives a contradiction to the assumption $0 \in \mathcal{S}_{\eta, 1}^k\left(u_i\right)$ for $i\gg 1$.		\end{proof}

	To continue, let us introduce the following notation.
	
	\begin{definition} For a stationary $s$-harmonic map $u \in \widehat{H}_{\Lambda}^s\left(D_4,N\right), x \in D_1$ and $0 \leq s_0<t_0<1$, denote
		$$
		\mathcal{W}_{s_0, t_0}(x, u):=\boldsymbol{\Theta}_s(u^e,\mathbf{x}, t_0)-\boldsymbol{\Theta}_s(u^e,\mathbf{x},s_0) \geq 0 .
		$$
	\end{definition}
	
	Notice that, by Monotonicity formula \eqref{monotonicity formular 1}, for $\left(s_1, t_1\right),\left(s_2, t_2\right)$ with $t_1 \leq s_2$,
	$$
	\mathcal{W}_{s_1, t_1}(x, u)+\mathcal{W}_{s_2, t_2}(x, u) \leq \mathcal{W}_{s_1, t_2}(x, u)
	$$
	with equality if $t_1=s_2$.
	Given constants $0<\gamma<1 / 2$ and $\delta>0$ and $q \in \mathbb{Z}^{+}$(these parameters will be fixed suitably in Lemma \ref{covering lemma}),  let $Q$ be the number of positive integers $j$ such that
	$$
	\mathcal{W}_{\gamma^{j+q}, \gamma^{j-1}}(x, u)>\delta \text {. }
	$$
	Then there has
	\begin{equation}\label{Q}
		Q \leq \frac{C_1(q+2)}{4^{m-2s}}\Lambda \delta^{-1},
	\end{equation}
	where $C_1$ is chosen as in \eqref{eq: E_sbounded}. To see this, just note that
	$$
	Q \de \le \sum_{j=1}^\wq \mathcal{W}_{\gamma^{j+q}, \gamma^{j-1}}(x, u) \leq (q+2)\mathcal{W}_{0,1}(x, u) \leq (q+2) \boldsymbol{\Theta}_s(u^e,\boldsymbol{0},2)\leq \frac{C_1(q+2)}{4^{m-2s}}\mathcal{E}_s\left(u, D_4\right).
	$$

	Following \cite{Cheeger-H-N-2013, Cheeger-H-N-2015, Cheeger-Naber-2013-Invent, Cheeger-Naber-2013-CPAM}, for each $x \in D_3$, we define a sequence $\left\{T_j(x)\right\}_{j \geq 1}$ with values in $\{0,1\}$ in the following manner. For each $j \in \mathbb{Z}^{+}$define
	$$
	T_j(x)= \begin{cases}
		1, \quad \text { if } \mathcal{W}_{\gamma^{j+q}, \gamma^{j-1}}(x, u)>\delta, \\
		0, \quad\text { if } \mathcal{W}_{\gamma^{j+q}, \gamma^{j-1}}(x, u)\leq\delta.
	\end{cases}
	$$
	\eqref{Q} implies that
	$$\sum_{j\ge 1}T_j(x)\le Q,\qquad \forall\,x\in D_3.$$
	That is,  there exist at most $Q$ nonzero entries in the sequence.  Thus, for each $\beta$-tuple $T^\be=\left(T_j^\beta\right)_{1 \leq j \leq \beta}$ with entries in $\{0,1\}$, by defining
	$$
	E_{T^\beta}(u)=\left\{x \in D_1 \mid T_j(x)=T_j^\beta \text { for } 1 \leq j \leq \beta\right\},
	$$
	we obtain a  decomposition of $D_1$ by at most $\binom{\beta}{Q} \leq \beta^Q$ non-empty such sets $E_{T^\beta}(u)$, even through aprior there have  $2^\be$ choices of such $\be$-tuple. This estimate plays an important role in the volume estimate below.
	
	\begin{lemma}[Covering Lemma]\label{covering lemma} There exists $c_0(m)<\infty$ such that, for each $\be\ge 1$, the set $\mathcal{S}_{\eta, \gamma^\beta}^j(u) \cap E_{T^\beta}(u)$ can be covered by at most $c_0\left(c_0 \gamma^{-m}\right)^Q\left(c_0 \gamma^{-j}\right)^{\beta-Q}$ balls of radius $\gamma^\beta$.
	\end{lemma}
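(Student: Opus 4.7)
The plan is to build the cover by scale-by-scale refinement, marching through the scales $\gamma^0, \gamma^1, \dots, \gamma^\beta$. At the coarsest scale $\gamma^0=1$ the target set $\mathcal{S}_{\eta,\gamma^\beta}^j(u) \cap E_{T^\beta}(u)$ lies in $D_1$ and is covered by a number of unit balls depending only on $m$. At each subsequent scale $\gamma^i$ the previous covering is refined ball-by-ball; the combinatorial content of the lemma is that each refinement step multiplies the count either by $c_0\gamma^{-m}$ (the trivial volume-packing case) or by $c_0\gamma^{-j}$ (the ``good'' case where rigidity plus cone-splitting confine the set to a thin neighborhood of a $j$-plane), and the choice between the two is dictated by $T_i^\beta\in\{0,1\}$. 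Since $E_{T^\beta}(u)$ is nonempty only when at most $Q$ entries of $T^\beta$ equal $1$, and since $m\geq j$ makes the ``bad'' factor dominate the ``good'' one, the product telescopes into the claimed $c_0(c_0\gamma^{-m})^Q(c_0\gamma^{-j})^{\beta-Q}$.

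The parameters $\tau,\ep,\delta,q$ must be chosen in the reverse of the order they appear. First fix the geometric threshold $\tau=\tau(m,\gamma)$ (essentially $\tau\simeq\gamma/10$) small enough that for any $j$-dimensional affine subspace $V\subset\R^m$, the intersection $D_{\gamma^{i-1}}(x)\cap T_{\tau\gamma^{i-1}}(V)$ admits a cover by at most $c_0(m)\gamma^{-j}$ balls of radius $\gamma^i$. With this $\tau$ and the ambient $\eta$, Lemma \ref{quantitative cone splitting} supplies an $\ep=\ep(s,m,N,\La,\eta,\tau)>0$. Feeding this $\ep$ together with $\gamma$ into Lemma \ref{quantitative rigidity} then produces the $\delta$ and $q$ used in defining the indicators $T_j(\cdot)$ and hence the partition $\{E_{T^\beta}(u)\}$.

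For the refinement step from $\gamma^{i-1}$ to $\gamma^i$, fix a ball $D_{\gamma^{i-1}}(x_\alpha)$ from the previous covering, centered at $x_\alpha\in\mathcal{S}_{\eta,\gamma^\beta}^j(u)\cap E_{T^\beta}(u)$. If $T_i^\beta=1$, we pay the trivial volume cost and cover $D_{\gamma^{i-1}}(x_\alpha)$ by at most $c_0(m)\gamma^{-m}$ balls of radius $\gamma^i$, retaining only those meeting the target set. If $T_i^\beta=0$, then for \emph{every} $y\in E_{T^\beta}(u)\cap D_{\gamma^{i-1}}(x_\alpha)$ we have by definition $\mathcal{W}_{\gamma^{i+q},\gamma^{i-1}}(y,u)\leq\delta$; the rescaled form of Lemma \ref{quantitative rigidity} therefore produces $(0,\ep)$-symmetry of $u$ on a ball of radius comparable to $\gamma^{i-1}$ centered at $y$. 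Combining this with the corresponding conclusion at $x_\alpha$ and with the inclusion $\mathcal{S}_{\eta,\gamma^\beta}^j(u)\subset\mathcal{S}_{\eta,\gamma^{i-1}}^j(u)$, Lemma \ref{quantitative cone splitting} then produces a $j$-dimensional affine subspace $V$ such that all target points in $D_{\gamma^{i-1}}(x_\alpha)$ lie in $T_{\tau\gamma^{i-1}}(V)$. The choice of $\tau$ then delivers the $c_0\gamma^{-j}$ refinement count.

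The main technical obstacle is a scale alignment issue: Lemmas \ref{quantitative rigidity} and \ref{quantitative cone splitting} are stated on $D_{2r}$-balls, while the discrete energy windows $\mathcal{W}_{\gamma^{i+q},\gamma^{i-1}}$ defining $T_j$ are dyadic in $\gamma$; matching the two requires a bookkeeping adjustment where one may need to enlarge $q$ and shrink $\gamma$ so that the discrete window $[\gamma^{i+q},\gamma^{i-1}]$ controls the continuous window $[\gamma^q r, 2r]$ appearing in the rigidity hypothesis, and likewise that the resulting $(0,\ep)$-symmetric scale is wide enough to serve as hypothesis for cone-splitting. These adjustments only affect the absolute constant $c_0=c_0(m)$ and preserve the form of the bound. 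With all parameters consistent, a straightforward induction on $i$ from $0$ to $\beta$ completes the argument, producing after $\beta$ refinements a covering of cardinality at most $c_0\cdot(c_0\gamma^{-m})^{\#\{i:T_i^\beta=1\}}(c_0\gamma^{-j})^{\#\{i:T_i^\beta=0\}}\leq c_0(c_0\gamma^{-m})^Q(c_0\gamma^{-j})^{\beta-Q}$, using $\#\{i:T_i^\beta=1\}\leq Q$ and $m\geq j$.
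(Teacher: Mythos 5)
Your proposal is correct and follows essentially the same scale-by-scale refinement argument as the paper: choose $\tau\sim\gamma$, then $\ep$ from cone-splitting, then $\delta,q$ from rigidity, and split the refinement step into the $T=1$ volume-packing case and the $T=0$ rigidity-plus-cone-splitting case, with at most $Q$ occurrences of the former. The scale-alignment worry you flag is in fact resolved without enlarging $q$ or shrinking $\gamma$: since $\gamma<1/2$, the dyadic window $[\gamma^{j+q},\gamma^{j-1}]$ already contains $[\gamma^{q}r,2r]$ for $r=\gamma^{j}$ by monotonicity of $\boldsymbol{\Theta}_s$, which is exactly the bookkeeping the paper tacitly uses.
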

	\begin{proof}
		For fixed $\eta, \gamma$, let $\tau=\gamma$ and choose $\ep$ as in Lemma \ref{quantitative cone splitting}. For this $\ep, \gamma$, by Lemma \ref{quantitative rigidity} there exist $\delta>0$ and $q \in \mathbb{Z}^{+}$such that if
		$$
		\boldsymbol{\Theta}_s(u^e,\mathbf{x},2\gamma^j)-\boldsymbol{\Theta}_s(u^e,\mathbf{x},\gamma^{j+q})\leq \delta
		$$
		then $u$ is $\left(0,\ep\right)$-symmetric on $D_{2\gamma^j}(x)$. Fix this $\delta, q$ throughout the proof and define $T_j(x)$ accordingly.
		
		We now determine the covering by induction argument. For $\beta=0$, we can simply choose a minimal covering of $\mathcal{S}_{\eta, 1}^j(u) \cap D_1$ by at most $c(m)$ balls of radius $1$ with centers in $\mathcal{S}_{\eta, 1}^j(u) \cap D_1$. Suppose now the statement holds for all $\be$-tuples, and given a $\be+1$ tuple $T^{\be+1}$. Here are two simple observations. First, by definition, we have $\mathcal{S}_{\eta, \gamma^{\beta+1}}^j(u) \subset \mathcal{S}_{\eta, \gamma^\beta}^j(u)$. Next, by denoting $T^\beta$ the $\beta$ tuple obtained by dropping the last entry from $T^{\beta+1}$, we immediately get $E_{T^{\beta+1}}(u) \subset E_{T^\beta}(u)$.
		
		We determine the covering recursively. For each ball $D_{\gamma^\beta}(x)$ in the covering of $\mathcal{S}_{\eta, \gamma^\beta}^j(u)\cap E_{T^\beta}(u)$, we will take a minimal covering of $D_{\gamma^\beta}(x) \cap \mathcal{S}_{\eta, \gamma^\beta}^j(u) \cap E_{T^\beta}(u)$ by balls of radius $\gamma^{\beta+1}$ as follows.  In the case $T_\beta^\beta=1$, then  we use a simple volume argument to  bound the number of balls geometrically to get a weaker bound on the covering by
		$$
		c(m) \gamma^{-m}.
		$$
		
	In the other case $T_\beta^\beta=0$, we can do better. In this case we have both $T_\beta(x)=0$ and $T_\beta(y)=0$ for all $y\in D_{\gamma^\beta}(x)\cap E_{T^\beta}(u)$, i.e. $\mathcal{W}_{\gamma^{\beta+q}, \gamma^{\beta-1}}(y, u)\leq \delta$.  By the choice of $\delta, q$, this implies that $u$ is $\left(0,\ep\right)$-symmetric on $D_{2\gamma^\beta}(y)$.  Recall that  $x \in \mathcal{S}_{\eta, \gamma^\beta}^j(u)$. Hence we can apply Lemma \ref{quantitative cone splitting} to conclude that the set $E_{T^\beta}(u) \cap D_{\gamma^\beta}(x)$ is contained in a $\gamma^{\beta+1}$ tubular neighborhood of some $j$ dimensional plane $V$. Therefore, in this case we can cover the intersection with the stronger bound on the number of balls
		$$
		c(m) \gamma^{-j} \text {. }
		$$
		
		Given any $\beta>0$ and $E_{T^\beta}(u)$, the number of times we need to apply the weaker estimate is bounded above by $Q$. Thus, the proof is complete.
	\end{proof}

	We are now ready to prove Theorem \ref{volume estimate}.
	
	\begin{proof}[Proof of Theorem \ref{volume estimate}] Choose $\gamma<1 / 2$ such that $\gamma \leq c_0^{-2 / \eta}$, where $c_0$ is as in Lemma \ref{covering lemma}. Then $c_0^\beta \leq(\gamma^\beta)^{-\eta / 2}$ and since exponentials grow faster than polynomials,
		$$
		\beta^Q \leq c(Q) c_0^\beta \leq c(\eta, m, Q)\left(\gamma^\beta\right)^{-\eta / 2} .
		$$
		
		Since	$ \operatorname{Vol}\left(D_{\gamma^\beta}(x)\right)=\omega_m \gamma^{\beta m}$
		and $D_1$ can be decomposed into at most $\beta^Q$ sets $E_{T^\beta}(u)$ for any $\beta$, we have
		$$
		\begin{aligned}
			\operatorname{Vol}\left(T_{\gamma^\beta}\left(\mathcal{S}_{\eta, \gamma^\beta}^j\right) \cap D_1\right) & \leq \beta^{Q}\left[\left(c_0 \gamma^{-m}\right)^Q\left(c_0 \gamma^{-j}\right)^{\beta-Q}\right] \omega_m \gamma^{\beta m} \\
			& \leq c(m, Q, \eta) \beta^Q c_0^\beta\left(\gamma^\beta\right)^{m-j} \\
			& \leq c(m, Q, \eta)\left(\gamma^\beta\right)^{m-j-\eta}.
		\end{aligned}
		$$
		Thus, for any $0<r<1$, by choosing $\beta>0$ such that $\gamma^{\beta+1} \leq r<\gamma^\beta$, we deduce that
		$$
		\begin{aligned}
			\operatorname{Vol}\left(T_r\left(\mathcal{S}_{\eta, r}^j\right) \cap D_1\right) & \leq \operatorname{Vol}\left(T_{\gamma^\beta}\left(\mathcal{S}_{\eta, \gamma^\beta}^j\right) \cap D_1\right) \\
			& \leq c(m, Q, \eta)\left(\gamma^\beta\right)^{m-j-\eta} \\
			& \leq c(m, Q, \eta)\left(\gamma^{-1} r\right)^{m-j-\eta} \\
			& \leq c(m, s, \eta, N, \Lambda) r^{m-j-\eta}
		\end{aligned}
		$$
		The proof is complete.
	\end{proof}

	\section{Proof of Theorems \ref{thm: integrability}, \ref{thm: regularity scale estimate} and  \ref{thm: regularity scale estimate-2}}\label{sec: regularity results}

	We first prove the following $\ep$-regularity theorem.

	\begin{theorem}\label{thm: sym-to-reg} Given $\Lambda>0$ and  assume  $u\in \widehat{H}_\Lambda^s(D_{4},\S^{d-1})$ is a stationary $s$-harmonic map if $1/2<s<1$, or  $u\in \widehat{H}_\Lambda^s(D_{4},\S^{d-1})$ is a minimizing $s$-harmonic map if $0<s\le 1/2$. There exists
		a constant $\de(m,\La,s)>0$ such that, if $u$ is $(m-1,\de)$-symmetric on $D_{2}$, then
		\[
		r_{u}(0)\ge\ka_{2},
		\]
		where $\ka_{2}=\ka_{2}(m,s)>0$	is the constant defined as in Theorem \ref{thm: partial regularity of MPS}.\end{theorem}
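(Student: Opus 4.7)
The plan is a contradiction argument combining the compactness of Theorem~\ref{thm: compactness} with a classification of $(m-1)$-symmetric limits. Suppose the conclusion fails: there would exist a sequence $\{u_i\}\subset\widehat{H}_\Lambda^s(D_4,\S^{d-1})$ of stationary $s$-harmonic maps if $1/2<s<1$ (or minimizing if $0<s\leq 1/2$) such that each $u_i$ is $(m-1,1/i)$-symmetric on $D_2$ but $r_{u_i}(0)<\kappa_2$. I would first apply Theorem~\ref{thm: compactness} to extract a subsequence with $u_i\to u$ strongly in $\widehat{H}^s(D_3,\R^d)$ and $u_i^e\to u^e$ strongly in $H^1(B_3^+,|z|^a\,d\mathbf{x})$, the limit $u$ being itself stationary (resp. minimizing) $s$-harmonic into $\S^{d-1}$. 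Combining Remark~\ref{rmk: symmetry compactness} (together with the unique continuation principle of \cite{Garofalo-Lin-86-Indiana} used in its proof) promotes the approximate $(m-1,1/i)$-symmetry of $u_i$ to genuine $(m-1)$-symmetry of $u$ on all of $\R^m$; after a rotation this forces the explicit form
\[
u(x) = a\,\chi_{\{x_1>0\}}(x) + b\,\chi_{\{x_1<0\}}(x), \qquad a,b\in\S^{d-1}.
\]

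The key step, which I expect to be the main obstacle, is to upgrade this to $a=b$, i.e., to rule out nonconstant jump limits. Here I would split on the regime of $s$. For $1/2<s<1$ (stationary case), a nonconstant step function is not in $\widehat{H}^s$ of any subdomain crossing $\{x_1=0\}$: the Gagliardo integrand $|u(x)-u(y)|^2|x-y|^{-m-2s}$ near the interface produces a divergent integral whenever $s\geq 1/2$, contradicting $\mathcal{E}_s(u,D_3)=\lim_i\mathcal{E}_s(u_i,D_3)\leq\Lambda<\infty$. For $0<s\leq 1/2$ (minimizing case), a nonconstant step function would give $\sing(u)\supset\{x_1=0\}\cap D_3$, a set of Hausdorff dimension $m-1$; this contradicts the minimizing partial regularity Theorem~\ref{thm: 1.3}, which forces $\dim_{\mathcal{H}}\sing(u)\leq m-2$ for $m\geq 3$ and $\sing(u)$ locally finite for $m=2$. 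Either way, $u$ must be constant on $D_3$, and in particular $\mathbf{E}_s(u^e,B_2^+)=0$.

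The final step transfers this vanishing of the limiting energy into the desired regularity-scale bound via the $\varepsilon$-regularity theorem. Strong $H^1$-convergence of $u_i^e$ on $B_2^+$ with weight $|z|^a$ yields $\boldsymbol{\Theta}_s(u_i^e,\boldsymbol{0},2)\to 0$, so for all $i$ large enough this normalized energy lies below $\varepsilon_1$, and Theorem~\ref{thm: partial regularity of MPS} applied with $R=2$ gives
\[
4\,\|\na u_i\|_{L^\infty(D_{2\kappa_2})}^2 \;\leq\; C_2\,\boldsymbol{\Theta}_s(u_i^e,\boldsymbol{0},2) \;\longrightarrow\; 0.
\]
Therefore $\kappa_2\|\na u_i\|_{L^\infty(D_{\kappa_2})}\leq 1$ for all large $i$, which forces $r_{u_i}(0)\geq\kappa_2$, in direct contradiction with the standing assumption $r_{u_i}(0)<\kappa_2$. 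This establishes the existence of the required $\delta=\delta(m,\Lambda,s)>0$.
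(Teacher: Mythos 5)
Your proof is correct, and it reaches the same conclusion via the same underlying ingredients (compactness of $s$-harmonic maps, unique continuation for the extension to upgrade approximate symmetry to genuine global symmetry, classification of $(m-1)$-symmetric limits, and the $\varepsilon$-regularity theorem). The paper organizes the argument differently: it first proves a symmetry self-improvement lemma (Lemma~\ref{lem: Symmetry self-improvement}: $(m-1,\delta)$-symmetry implies $(m,\epsilon)$-symmetry) and then a separate $(m,\epsilon)$-regularity lemma (Lemma~\ref{lemma: new epsilon regularity}), each via its own compactness contradiction; you collapse these into a single contradiction argument, which is a valid and somewhat leaner presentation. The one place where your route genuinely diverges is the elimination of a nonconstant jump limit in the regime $0<s\leq 1/2$: the paper invokes the $(m-1)$-symmetry to reduce to a one-dimensional minimizing $s$-harmonic map on $(-1,1)$ (via \cite[Lemma~7.13]{Millot-Pegon-Schikorra-2021-ARMA}) and then uses conclusion~(3) of Theorem~\ref{thm: 1.3} to get continuity, whereas you stay in $\R^m$ and rule out the $(m-1)$-dimensional jump set directly by the Hausdorff-dimension bounds of Theorem~\ref{thm: 1.3}(1)--(2). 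Both are correct for $m\geq 2$; your version avoids appealing to the dimension-reduction lemma for minimizers, at the mild cost of being tied to $m\geq 2$. For $1/2<s<1$ your infinite-Gagliardo-energy argument is exactly what the paper does (citing \cite[Lemma~7.12]{Millot-Pegon-Schikorra-2021-ARMA}). The final step, passing from $\boldsymbol{\Theta}_s(u_i^e,\boldsymbol{0},2)\to 0$ to $r_{u_i}(0)\geq\kappa_2$ via Theorem~\ref{thm: partial regularity of MPS}, is the content of the paper's Proposition~\ref{prop: reg-scale}, which you have reproduced inline; the constants match once one notes that $\|\nabla u_i\|_{L^\infty(D_{2\kappa_2})}\to 0$ certainly gives $\sup_{D_{\kappa_2}}\kappa_2|\nabla u_i|\leq 1$ for large $i$.
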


	The proof relies on the following lemmata.	The first
	is an $\ep$-regularity lemma, which shows that high order
	symmetry implies regularity.
	
	\begin{lemma}[$(m,\epsilon)$-Regularity]\label{lemma: new epsilon regularity}
		Given $\Lambda>0$ and  assume that $u\in \widehat{H}_\Lambda^s(D_{4},\S^{d-1})$ is a stationary $s$-harmonic map if $1/2<s<1$, or  $u\in \widehat{H}_\Lambda^s(D_{4},\S^{d-1})$ is a minimizing $s$-harmonic map if $0<s\le 1/2$.  There exists $\ep>0$ depending only on $s,m,\La,n$ such that
		\[
		r_{u}(0)\ge\kappa_{2}
		\]
		whenever $u$ is $(m,\ep)$-symmetric on $D_2$.
		\end{lemma}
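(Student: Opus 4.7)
My plan is a blow-up and compactness contradiction argument that reduces to the partial regularity Theorem \ref{thm: partial regularity of MPS}. Suppose the lemma fails. Then extract $\ep_i\to 0^+$ and maps $u_i\in \widehat{H}_\La^s(D_4,\S^{d-1})$, stationary when $s\in(1/2,1)$ and minimizing when $s\in(0,1/2]$, each $(m,\ep_i)$-symmetric on $D_2$ yet with $r_{u_i}(0)<\ka_2$. By the definition of $(m,\ep_i)$-symmetry there exist $m$-symmetric functions $h_i\in \widehat{H}^s(D_4,\R^d)$ with $\medint_{B_2^+}|u_i^e-h_i^e|^{\ga_0}\,d\mathbf{y}\le \ep_i$, and since translation invariance along an $m$-dimensional subspace of $\R^m$ means translation invariance on all of $\R^m$, each $h_i$ is constant, $h_i\equiv c_i\in\R^d$, hence $h_i^e\equiv c_i$ on $\overline{\R^{m+1}_+}$. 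The Poisson formula \eqref{u^e} combined with the sphere constraint forces $\|u_i^e\|_{L^\infty}\le 1$, so the sequence $\{c_i\}$ is bounded; pass to a subsequence $c_i\to c$.

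Next I will invoke the compactness Theorem \ref{thm: compactness}, which covers each of our three regimes. Along a further subsequence, $u_i^e\to u^e$ strongly in $H^1(B_2^+,|z|^a d\mathbf{x})$ and in $L^{\ga_0}(B_2^+)$, and the limit $u$ is itself stationary or minimizing. Passing to the limit in the displayed inequality then gives $u^e\equiv c$ on $B_2^+$. Since $u^e$ solves the degenerate elliptic equation $\operatorname{div}(z^a\na u^e)=0$ in $\R^{m+1}_+$, the unique continuation principle of \cite{Garofalo-Lin-86-Indiana} propagates this identity, forcing $u^e\equiv c$ throughout $\R^{m+1}_+$, and in particular $\mathbf{E}_s(u^e,B_2^+)=0$.

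Finally, strong $H^1$-convergence of the extensions yields $\boldsymbol{\Theta}_s(u_i^e,\mathbf{0},2)=2^{-(m-2s)}\mathbf{E}_s(u_i^e,B_2^+)\to 0$, so for $i$ sufficiently large this normalized energy sits below $\min\{\varepsilon_1,\,1/C_2\}$. Applying Theorem \ref{thm: partial regularity of MPS} at scale $R=2$ then gives $4\|\na u_i\|_{L^\infty(D_{2\ka_2})}^2\le C_2\boldsymbol{\Theta}_s(u_i^e,\mathbf{0},2)\le 1$, whence $r_{u_i}(0)\ge \ka_2$, contradicting our choice. I expect the most delicate step to be the promotion of local constancy on $B_2^+$ to global constancy on $\R^{m+1}_+$ via unique continuation; this is precisely where the hypothesis $k=m$ (the subspace $V=\R^m$) is essentially used, and it is also the reason why the subsequent Theorem \ref{thm: sym-to-reg}, which only assumes $(m-1,\de)$-symmetry, requires an extra layer of argument beyond what this lemma provides.
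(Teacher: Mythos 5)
Your proof is correct and follows the same contradiction-by-compactness strategy as the paper's: extract a sequence violating the claim, pass to a strong limit via Theorem \ref{thm: compactness}, show the limiting extension is constant on $B_2^+$, deduce $\boldsymbol{\Theta}_s(u_i^e,\boldsymbol{0},2)\to 0$, and invoke the $\varepsilon$-regularity (Theorem \ref{thm: partial regularity of MPS}) to contradict $r_{u_i}(0)<\kappa_2$. The one superfluous step is the appeal to unique continuation: once you have $u^e\equiv c$ on $B_2^+$, the identity $\mathbf{E}_s(u^e,B_2^+)=0$ is immediate, so propagating constancy to all of $\mathbb{R}^{m+1}_+$ is not needed (and is not, as you suggest, where the hypothesis $k=m$ enters — that is already used when you conclude each $h_i$ is constant).
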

	\begin{proof}
		We only consider the case $s\in (1/2,1)$, another case can be proved similarly.  Suppose, on the contrary, that there exist a sequence of stationary
		$s$-harmonic maps $u_{k}\in\widehat{H}_{\Lambda}^{s}(D_{4},\S^{d-1})$
		such that $u_{k}$ is $(m,1/k)$-symmetric on $D_2$ and  $r_{u_{k}}(0)<\kappa_{2}$. By Theorem \ref{thm: compactness},
		we can assume that $u_k\wto u$ weakly for some stationary $s$-harmonic map $u\in\widehat{H}_{\Lambda}^{s}(D_{4},\S^{d-1})$, and  $u_{k}^e\to u^e$  strongly in $H^1(B_2^+,|z|^ad\mathbf{x})$.
		Letting $k\to \wq$ we find that $u^e$ is 0-homogeneous and translation invariant with respect to the subspace $\R^m\subset \R^{m+1}$. This implies that  $u^e\equiv \text{const.}$.  But then, by the strong convergence we know that
		$$
		\boldsymbol{\Theta}_s(u_{k}^e,\boldsymbol{0},2)\rightarrow 0\quad \text{as}\quad k\rightarrow \infty,
		$$
		which implies that $r_{u_{k}}(0)\ge\kappa_{2}$  for $k \gg 1$ by Proposition \ref{prop: reg-scale}.		We reach a contradiction.
	\end{proof}

We remark that,  in the case $0<s<1/2$, the above lemma also holds for  stationary $s$-harmonic maps, since in the argument only the compactness of $s$-harmonic maps is needed.
The second ingredient of the proof of Theorem \ref{thm: sym-to-reg} is the following symmetry self-improvement
	lemma.

	\begin{lemma}[Symmetry self-improvement]\label{lem: Symmetry self-improvement}
		Given $\Lambda>0$ and  assume that $u\in \widehat{H}_\Lambda^s(D_{4},\S^{d-1})$ is a stationary $s$-harmonic map if $1/2<s<1$, or  $u\in \widehat{H}_\Lambda^s(D_{4},\S^{d-1})$ is a minimizing $s$-harmonic map if $0<s\le 1/2$. Then, for any $\ep>0$, there
		exists $\de>0$ such that if $u$ is $(m-1,\de)$-symmetric on $D_{2}$, then $u$ is also $(m,\ep)$-symmetric
		on $D_{2}$. \end{lemma}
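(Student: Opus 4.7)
The plan is to argue by contradiction combined with compactness, using the forbidden codimension-one singular set to force rigidity. Suppose the conclusion fails: there exist $\epsilon_0 > 0$ and a sequence $\{u_i\} \subset \widehat{H}_\Lambda^s(D_4, \mathbb{S}^{d-1})$, each stationary when $s \in (1/2,1)$ or minimizing when $s \in (0,1/2]$, such that every $u_i$ is $(m-1, 1/i)$-symmetric on $D_2$ yet fails to be $(m, \epsilon_0)$-symmetric on $D_2$.

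I would then apply Theorem~\ref{thm: compactness}: after passing to a subsequence, $u_i \rightharpoonup u$ in $\widehat{H}^s(D_4)$, with $u_i^e \to u^e$ strongly in $H^1(B_R^+, |z|^a d\mathbf{x})$ for every $R<4$, and hence strongly in $L^{\gamma_0}(B_R^+)$ by \eqref{eq: comp-embedding}; moreover the limit $u$ is itself stationary/minimizing in the appropriate class. Repeating the short argument behind Remark~\ref{rmk: symmetry compactness}, i.e., using the defining approximants $h_i$ of the $(m-1,1/i)$-symmetry together with the unique continuation principle of Garofalo--Lin, the limit $u$ is globally $(m-1)$-symmetric on $D_2$. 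After rotating coordinates so that the invariant hyperplane is $V=\{x_m=0\}$, this means $u(x) = \tilde{u}(x_m)$ with $\tilde{u}:\mathbb{R}\to \mathbb{S}^{d-1}$ being $0$-homogeneous on the line, so $\tilde{u} = a\chi_{(0,\infty)} + b\chi_{(-\infty,0)}$ for some $a,b \in \mathbb{S}^{d-1}$.

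The heart of the proof is the rigidity $a=b$. Suppose otherwise; then every point of $V\cap D_4$ is a jump discontinuity of $u$, so $V\cap D_4 \subset \mathrm{sing}(u)$. This is an $(m-1)$-dimensional subset of positive $\mathcal{H}^{m-1}$-measure in $D_4$, which is ruled out by the singular-set estimates of Theorem~\ref{thm: 1.2}(1)--(3) in the stationary range $s>1/2$, and of Theorem~\ref{thm: 1.3}(1)--(2) in the minimizing range $s\leq 1/2$ (in the borderline case $m=2$, one uses that $\mathrm{sing}(u)$ is locally finite in $\Omega$ whereas $V\cap D_4$ is a full diameter). Hence $a=b$, i.e., $u\equiv a$ is constant.

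To close the contradiction, choose the constant $m$-symmetric competitor $h \equiv a$, whose $s$-harmonic extension is $h^e \equiv a$ as well. By the strong convergence $u_i^e \to u^e = a$ in $L^{\gamma_0}(B_2^+)$, we have $\medint_{B_2^+} |u_i^e - h^e|^{\gamma_0}\, d\mathbf{x} \to 0$, so for $i$ sufficiently large $u_i$ is $(m,\epsilon_0)$-symmetric on $D_2$, contradicting the choice of $\{u_i\}$. The main obstacle is precisely the rigidity step above: ruling out non-constant $(m-1)$-symmetric sphere-valued $s$-harmonic maps. The clean resolution is to observe that any such map would create a singular hyperplane, which is incompatible with the codimension-two (or locally-finite, when $m=2$) singular set estimates provided by the partial regularity theorems of Millot--Pegon--Schikorra quoted as Theorems~\ref{thm: 1.2}--\ref{thm: 1.3}.
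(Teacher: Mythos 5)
Your overall structure --- contradiction plus compactness, extracting a limit $u$ that is globally $(m-1)$-symmetric but not $m$-symmetric --- matches the paper's proof. Where you diverge is the rigidity step proving $a=b$. The paper reduces to one dimension via a slicing argument (cf.\ \cite[Lemma 7.12]{Millot-Pegon-Schikorra-2021-ARMA}): writing $u(x)=v(x_1)$ for some $0$-homogeneous $v\in H^s(D_2,\S^{d-1})$, for $s\in(1/2,1)$ it uses that a nonconstant step function satisfies $[v]_{H^s((-1,1))}=+\infty$, and for $s\in(0,1/2]$ it appeals to the one-dimensional regularity of minimizers (Theorem~\ref{thm: 1.3}(3)). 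You instead observe that a nonconstant $(m-1)$-symmetric limit would force the entire invariant hyperplane into $\sing(u)$, contradicting the codimension-two (or locally-finite, for $m=2$) singular set bounds of Theorems~\ref{thm: 1.2} and~\ref{thm: 1.3}. Both resolutions are correct in the relevant range $m\geq 2$. Your route treats the two $s$-ranges more uniformly and sidesteps the 1D slicing lemma, at the price of invoking the heavier partial regularity theory; the paper's route is more self-contained, reducing the rigidity to the elementary fact that a jump discontinuity is forbidden in $H^s$ for $s\geq 1/2$ and to the 1D smoothness of minimizers for $s\leq 1/2$.
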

	\begin{proof}
		We first consider the case $s\in (1/2,1)$. 	Suppose, for some $\ep_{0}>0$ and for each $k\ge1$, there is a sequence of
		stationary $s$-harmonic maps $u_{k}\in\widehat{H}_{\Lambda}^{s}(D_{4},\S^{d-1})$
		which is $(m-1,1/k)$-symmetric but not $(m,\ep_{0})$-symmetric on $D_2$.
		By the first conclusion of Theorem \ref{thm: compactness}, we can assume that $u_k\wto u$ weakly for some stationary $s$-harmonic map $u\in\widehat{H}_{\Lambda}^{s}(D_{4},\S^{d-1})$, and  $u_{k}^e\to u^e$  strongly in $H^1(B_2^+,|z|^ad\mathbf{x})$. Then
		$u$ is $(m-1)$-symmetric but not $(m,\ep_{0})$-symmetric on $D_{2}$. Now, using  the strong unique continuation result of \cite[Theorem 1.2]{Garofalo-Lin-86-Indiana} (see also the paragraph right below the equation \eqref{eq: s-harmonic equation}), we infer that $u(x)=v(x_1)$ for some $v\in H^s(D_2,\S^{d-1})$ such that (due to the homogeneity)
		\[ v(x_1)=\begin{cases}a,& \text{ if } x_1>0\\
		b,& \text{ if } x_1<0
		\end{cases}\]
		for some $a,b\in \S^{d-1}$. It is known that  $a\neq b$ implies $[v]_{H^s((-1,1))}=+\wq$ (see the argument of \cite[Lemma 7.12]{Millot-Pegon-Schikorra-2021-ARMA}). Hence $a=b$, which means that $v$ is a constant map and so $m$-symmetric in $D_2$, again a contradiction.
			The proof is complete.
		
		In the case $s\in (0,1/2]$,  arguing  as in the previous case by using  the second conclusion of the compactness theorem \ref{thm: compactness}, and also as that of \cite[Lemma 7.13]{Millot-Pegon-Schikorra-2021-ARMA}, we can conclude that $u$ is a minimizing $s$-harmonic map in the 1-dimensional interval $(-1,1)$. Thus $u$ must be a continuous map by  conclusion (3) of Theorem \ref{thm: 1.3}.  Therefore $u$ is a constant map since $u$ is also 0-homogeneous. We reach a contradiction again!
	\end{proof}
	
	Now we can prove Theorem \ref{thm: sym-to-reg}.

	\begin{proof}[Proof of Theorem \ref{thm: sym-to-reg}] It follows from Lemma  \ref{lemma: new epsilon regularity} and  \ref{lem: Symmetry self-improvement}. The proof is complete.\end{proof}
	
	Now we are ready to prove Theorems \ref{thm: integrability}, \ref{thm: regularity scale estimate}  and \ref{thm: regularity scale estimate-2}.
	
	\begin{proof}[Proof of Theorem \ref{thm: regularity scale estimate}] If $x\in \mathcal{B}_{ r}(u)$, then by Theorem \ref{thm: sym-to-reg}, $u$ is not $(m-1,\delta)$-symmetric on $D_{2r/\kappa_2}^+(x)$. In other words, $x\in \mathcal{S}_{\eta, 2r/\kappa_2}^{m-2}$ for any $0<\eta\leq \de(s,m,\Lambda)$, the constant defined as in  Theorem \ref{thm: sym-to-reg}. Therefore, we have
		\[
		\mathcal{B}_r(u)\subset \mathcal{S}_{\eta, 2r/\kappa_2}^{m-2}, \qquad \forall\, 0<\eta\leq \de(s,m,\Lambda).
		\]
		Then Theorem \ref{volume estimate} yields
		\[
		\operatorname{Vol}\left(T_r\left(\mathcal{B}_r(u)\right) \cap D_1(x)\right) \leq \operatorname{Vol}\left(T_r\left(\mathcal{S}_{\eta, 2 r/\kappa_2}^{m-2}\right) \cap D_1(x)\right) \leq C(m,s,  \Lambda, \eta) r^{2-\eta}.
		\]		
		The proof is complete.
	\end{proof}
	\begin{proof}[Proof of Theorem \ref{thm: regularity scale estimate-2}] Totally similar to the above, and  omitted. \end{proof}
	
	\begin{proof}[Proof of theorem \ref{thm: integrability}]  This theorem follows from the simple observation that
		\[\{x\in D_1: |\na u(x)|>1/r \} \subset \{x\in D_1: r_u(x)<r \}  \]
		and the volume estimate of  Theorem \ref{thm: regularity scale estimate}. 		
	\end{proof}

\end{document}